\documentclass[reqno,english]{amsart}

\usepackage[letterpaper,margin=1in]{geometry}
\usepackage{amsmath,amsfonts,amssymb,amsthm,mathtools}
\usepackage{algorithm}
\usepackage{algpseudocode}
\usepackage{xcolor}
\usepackage{microtype}
\usepackage[numbers]{natbib}
\usepackage{aliascnt}
\usepackage[colorlinks=true,linkcolor=blue,citecolor=blue,urlcolor=cyan]{hyperref}
\usepackage[capitalize,noabbrev]{cleveref}
\hypersetup{
    pdftitle={Sharp threshold for online balancing of i.i.d. binary vectors},
    pdfauthor={Dylan J. Altschuler and Konstantin Tikhomirov}
}

\newcommand*{\RR}{\mathbb R}
\newcommand*{\ZZ}{\mathbb Z}
\newcommand*{\PP}{\mathbb P}
\newcommand*{\EE}{\mathbb E}
\newcommand*{\calF}{\mathcal F}
\newcommand*{\calM}{\mathcal M}
\newcommand*{\ind}{\mathbf 1}
\newcommand*{\supp}{\operatorname{supp}}
\newcommand*{\sign}{\operatorname{sign}}
\newcommand*{\col}{\operatorname{col}}
\newcommand*{\Var}{\operatorname{Var}}
\newcommand*{\disc}{\operatorname{disc}}

\newtheorem{theorem}{Theorem}[section]

\newaliascnt{proposition}{theorem}
\newtheorem{proposition}[proposition]{Proposition}
\aliascntresetthe{proposition}

\newaliascnt{corollary}{theorem}

\aliascntresetthe{corollary}

\newaliascnt{lemma}{theorem}
\newtheorem{lemma}[lemma]{Lemma}
\aliascntresetthe{lemma}

\theoremstyle{definition}
\newaliascnt{definition}{theorem}
\newtheorem{definition}[definition]{Definition}
\aliascntresetthe{definition}
\newaliascnt{remark}{theorem}
\newtheorem{remark}[remark]{Remark}
\aliascntresetthe{remark}

\makeatletter
\let\c@algorithm\c@theorem

\makeatother

\title{The threshold for online balancing of i.i.d. binary vectors}

\author[D. J. Altschuler]{Dylan J. Altschuler}
\address{Department of Mathematics, The University of Texas at Austin}
\email{dylan.altschuler@austin.utexas.edu}
\author[K. Tikhomirov]{Konstantin Tikhomirov}
\address{Department of Mathematical Sciences, Carnegie Mellon University}
\email{ktikhomi@andrew.cmu.edu}

\begin{document}

\begin{abstract}
Consider the task of online vector balancing for stochastic arrivals
$X_1,\ldots,{X_T}$, where the $X_i$ are independent uniformly random
$d$--sparse binary vectors in $\{0,1\}^n$. This is a random analogue of the online Beck--Fiala problem.  We show that uniformly for
$2\le d\le n/2$ and $T = \Theta(n)$, the optimal online prefix discrepancy
$\max\limits_{t\leq T}\left\|\sum_{i=1}^t\sigma_i X_i\right\|_\infty$
is of order
\[
    \Theta\big(\max\{\sqrt d,\log\log n\}\big).
\]
The upper bound is achieved by an efficient online
algorithm. Thus, for $d\le(\log\log n)^2$, the optimal discrepancy is
$\Theta(\log\log n)$ and is independent of the sparsity up to constant
factors, whereas above this scale it is $\Theta(\sqrt d)$, matching the order
of the offline discrepancy.  This identifies the threshold at which sparsity
begins to govern the online discrepancy of the random Beck--Fiala model. \\

\textit{This work supersedes and replaces a previous article of the authors with the same lower bound, but the upper bound for $d \le (\log \log n)^2/\log\log\log n$} \cite{AT-old}.
\end{abstract}

\maketitle

\section{Introduction}
Vector balancing is a fundamental task in combinatorics with numerous applications in algorithm design and optimization, ranging from rounding integer programs to experimental design \cite{beck-fiala,experimental-design}. A vector balancing problem is the task of assigning signs $\sigma_i \in \{-1,+1\}$ to vectors $X_i \in \RR^n$ with the goal of minimizing $\|\sum X_i\sigma_i\|_\infty$. Combinatorial discrepancy, a classical and widely studied quantity, corresponds to the case that all of the vectors $X_i$ are known beforehand:
\[
    \disc(X_1,\dots,X_T) = \min_{\sigma \in \{-1,1\}^T}\Big\|\sum_{i \in [T]} X_i \sigma_i\Big\|_\infty \,.
\]
Another classical setting, of particular interest in applications, is \textit{online} discrepancy minimization \cite{spencer-online}: the $X_i$ are revealed sequentially, and $\sigma_i$ must be chosen immediately and irrevocably upon revealing $X_i$. A natural motivation for this setting is that many of the optimization problems connected to discrepancy minimization, such as bin packing and job scheduling \cite{bin}, have important online analogues. 

A rich body of works has developed estimates on the objective value for the online discrepancy minimization problem under various assumptions on the vectors $X_i$ (see, e.g., \cite{self-balancing,bansal-survey,bansal-online,optimal-online} and the references therein). Two core settings in this research direction are the so--called Koml\'os and Beck--Fiala settings, corresponding respectively to the cases that each $X_i$ is a unit vector or each $X_i$ is a $d$--sparse binary vector. The Koml\'os conjecture predicts a dimension--free discrepancy bound for unit Euclidean norm vectors, while the Beck--Fiala conjecture predicts an $O(\sqrt d)$ bound for $d$--sparse binary vectors. In contrast, the worst--case online Koml\'os discrepancy grows with the number of arrivals \cite{optimal-online}.

A line of research spanning both the online and offline settings considers random and semi--random choices of the vectors $X_i$. Random discrepancy has garnered intense recent interest for a wealth of reasons, including interest in beyond--worst--case algorithmic aspects \cite{bansal-smooth1,bansal-smooth2}, connections with statistical physics \cite{APZ}, and perturbative analogues of long--standing conjectures in discrepancy theory \cite{bansal-smooth2}. Estimating the algorithmic and existential thresholds for discrepancy of random vectors, especially with regards to distinguishing the online and offline settings, remains an active and compelling area. We refer the reader to the ICM survey of Bansal \cite{bansal-survey} for a modern, comprehensive account of all the previously mentioned topics. 

The present work investigates the online discrepancy of a sequence of uniformly random $d$--sparse binary vectors, corresponding to the {\it average-case online Beck--Fiala problem}.  Each $X_t$ is the
indicator of an independent uniformly random $d$--subset of $[n]$, or,
equivalently, the input matrix has independent uniform $d$--sparse binary
columns. For square matrices, the offline discrepancy is $\Theta(\sqrt d)$
with high probability whenever $2\leq d\leq n/2$.  The upper bound of $O(\sqrt{d})$ follows by
combining the spectral algorithm of 
\cite{potukuchi-spectral}, valid for the sparse regime of $d=o(\sqrt n)$,
with the algorithmic partial coloring result of \cite{bansal-meka}, valid for
$d=\Omega((\log\log n)^2)$. The matching offline lower bound of $\Omega(\sqrt{d})$ is 
considered in \cite{random-hypergraph-discrepancy}. 

Our main result shows that, unlike the offline regime,
in the online setting the optimal discrepancy exhibits a phase transition around $d=(\log\log n)^2$:

\begin{theorem}\label{thm:main}
There exist universal constants $c,C > 0$ such that the following holds. Let $2\le d\le n/2$, and let $A$ be an $n\times T$ random matrix with $T = n$ whose columns are
independent uniformly random $d$--sparse binary vectors.
\begin{itemize}
    \item For any fixed online algorithm, it holds with probability $1-o(1)$,
    \begin{equation}\label{eq:main-LB}
        \left\|\sum_{s\le n}\sigma_s\col_s(A)\right\|_\infty
        \ge c\big(\sqrt d+\log\log n\big).
    \end{equation}
    \item There is an efficient online algorithm for which, with probability
    $1-o(1)$,
    \begin{equation}\label{eq:main-UB}
        \max_{t\le n}\left\|\sum_{s\le t}\sigma_s\col_s(A)\right\|_\infty
        \le C\big(\sqrt d+\log\log n\big).
    \end{equation}
\end{itemize}
\end{theorem}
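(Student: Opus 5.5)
The proof splits into the lower bound \eqref{eq:main-LB} and the upper bound \eqref{eq:main-UB}. Each of these splits further into the $\sqrt d$ term and the $\log\log n$ term.

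\emph{Lower bound.} The $\sqrt d$ part follows from the offline lower bound of \cite{random-hypergraph-discrepancy}, since the online final discrepancy is at least the offline discrepancy $\disc(A)$. It therefore suffices to prove $\Omega(\log\log n)$ when $d\le(\log\log n)^2$. The plan is a ``forced growth'' argument along a chain of rows.

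Suppose some row $r$ currently has partial sum $|S_r(t)|=k$. Whenever a new column hits row $r$ together with a row $r'$ whose current sum has the same magnitude and the appropriate sign pattern, the sign choice must increase $|S_r|$ or $|S_{r'}|$. This holds in particular for $d=2$; for larger $d$ we condition on all other coordinates of the column being on rows with sum $0$, or otherwise negligible.

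Let $N_k(t)$ be the number of rows at level $\ge k$. The expected rate at which level-$k$ rows collide pairwise is of order $N_k^2 d^2/n^2$ per step. Over $\Theta(n)$ steps this produces $N_{k+1}\gtrsim N_k^2 d^2/n$, up to constants. Reading the recursion as $N_{k+1}/n\approx (N_k/n)^2$ starting from $N_1/n=\Theta(1)$, we see that the density $N_k/n$ decays doubly exponentially. Hence levels up to $k\approx c\log\log n$ are still reached by at least one row.

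To make this rigorous, split time into $K=c\log\log n$ epochs of length $n/K$. We show by induction, using a martingale or Chernoff concentration with slack, that with high probability $N_k\ge n\exp(-C^k)$ at the end of epoch $k$. The delicate point is that rows at level $k$ may later be lowered. I would handle this by noting that within a single epoch each row is hit only $O(d)$ times in expectation, and by tracking only those rows that are hit exactly once by a ``forcing'' column and otherwise untouched.

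\emph{Upper bound.} We use a two-regime algorithm. For the $\sqrt d$ term with $d\ge(\log\log n)^2$, I would use a potential-function greedy, in the style of the exponential-potential / self-balancing walk \cite{self-balancing}: choose $\sigma_t$ to minimize $\Phi=\sum_r\cosh(\lambda S_r)$ with $\lambda\approx 1/\sqrt d$. Since the columns are random, the expected drift of $\Phi$ is controlled by a negative term proportional to $\lambda\,\EE|\langle\nabla\Phi,X_t\rangle|$, which balances the second-order increase $\lambda^2 d\,\Phi/n$. This keeps $\Phi=O(n)$, which yields $\max|S_r|\lesssim\sqrt d\log n$ — too weak. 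The sharp bound needs a genuinely different idea.

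The plan is the following. Maintain that each row's sum is a martingale-like walk with a restoring drift: with random columns, the greedy sign correlates with rows of large $|S_r|$, so each row experiences drift $-\Theta(\sign(S_r)/\sqrt d)$ per hit when $|S_r|\gtrsim\sqrt d$. A row is hit $\approx d$ times in total, so the stationary tail is $\PP(|S_r|>u\sqrt d)\le e^{-cu\sqrt d}$ beyond the scale $\sqrt d$. Together with a union bound over $n$ rows this would already give $O(\sqrt d+\log n/\sqrt d)$. To improve $\log n/\sqrt d$ to $\log\log n$, use a doubly-exponential tail: the probability that a row exceeds $\sqrt d+k$ should be $\exp(-2^{k})$. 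This mirrors the lower-bound recursion, in the spirit of power-of-two-choices load balancing, where the max load is $\log\log n$.

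Concretely, the algorithm prioritizes the coordinate of $X_t$ with largest $|S_r|$ (lexicographically), and we prove by a layered induction (as in Azar--Broder--Karlin--Upfal) that $\nu_k$, the fraction of rows with $|S_r|\ge k+C\sqrt d$, satisfies $\nu_{k+1}\le C\nu_k^2$. The reason is that a row gets pushed up only when the column it is in contains another row of at least the same level that is favored.

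\emph{Main obstacle.} The main obstacle is this layered induction for general $d$, where one column touches $d$ rows simultaneously and the greedy rule can only protect the top one. I would first try a hybrid rule: protect the top $O(1)$ levels lexicographically, and otherwise use the $\cosh$-potential at scale $\sqrt d$. Showing that these two regimes interlock without loss is the technical core.
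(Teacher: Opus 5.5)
Your upper-bound architecture (potential minimization for the bulk, greedy on the largest row near the top, a power-of-two-choices layered induction above a threshold) matches the paper's. But the proposal leaves exactly the hard part open, and the ingredients you name would not close it.

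First, for growing $d$ the layered induction cannot read $\nu_{k+1}\le C\nu_k^2$. A row meets a set of density $\mu$ in one of its $\approx d$ columns with probability $\approx d^2\mu$, so any valid recursion carries powers of $d$. Counting columns that contain two rows at level $\ge k$ gives $\EE\,\nu_{k+1}\lesssim d^3\nu_k^2$. The paper forces three upward moves per level and gets $\PP\{\nu_k>\mu',\,\nu_{k-1}\le\mu\}\le(Cd^2\mu)^3/\mu'$. Either way, the recursion only contracts once the fraction of rows that ever reach the base level $\Theta$ is $\ll d^{-3}$, with enough slack that Markov failure probabilities sum over $\Theta(\log\log n)$ levels. (The paper also protects every row that has ever reached $\Theta$, permanently, not just the top $O(1)$ levels.)

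Getting that base bound with $\Theta=O(\sqrt d+\log\log n)$ is the missing idea. A $\cosh(\lambda S)$ potential with $\lambda\asymp1/\sqrt d$ cannot supply it: climbing from potential $\asymp d$ to a cap $Y\approx d^3$ takes $\sqrt d\log(Y/d)\gg\sqrt d+\log\log n$ further levels. The paper instead uses the inverse-square potential $(\rho^2/(\rho^2-k^2))^2$ with $\rho\asymp a\sqrt d$ up to $b\approx\rho-a$, where $F(b)\asymp d$. A potential budget of $Kn$ then leaves only $O(Kn/d)$ rows beyond $b$, and a fresh column sees $O(K)$ of them on average. This is what makes affordable an exponential continuation at \emph{constant} rate $\lambda\asymp1/a$, capped at $Y=d^3(\log\log n)^6$, giving $\Theta\le Ca(\sqrt d+\log Y)$ and $|E_n|\le Kn/Y$.

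Bounding the drift of $\Phi_t=Y|E_t|+\sum_{i\notin E_t}F(S_i(t))$ then needs three more things. One is a lower bound on $\EE|\sum_{i\in R_t}\nabla F(S_i(t-1))|$ valid for \emph{arbitrary} signs of the running sums; this is the paper's sampling lemma, proved via a random pairing and Khintchine. The others are the estimate that a column meets $E_{t-1}$ with probability at most $Kd/Y$, and Freedman's inequality. Your heuristic of a restoring drift of order $1/\sqrt d$ per hit tacitly assumes CLT-type independence of those signs, which fails because the algorithm correlates them. Finally, for large $d$ your argument gives only $\sqrt d\log n$; the paper covers $d>n^{1/5}$ by citing its worst-case online Beck--Fiala theorem.

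The lower-bound sketch also has a flaw as written. Your recursion $N_{k+1}\gtrsim N_k^2d^2/n$ counts all collisions among level-$k$ rows. But only a column meeting one positive and one negative partial sum forces an outward move, and the algorithm chooses which of the two moves. By always taking $\sigma=+1$ in such columns, it makes every newly forced level-$(k+1)$ row positive, after which your same-magnitude collisions at level $k+1$ force nothing.

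The repair is the paper's $(\ell,r)$--spread: two disjoint sets at levels $\ell\le0\le r$ of possibly different magnitudes. Isolated forcing columns plus pigeonhole yield an $(\ell,r+1)$ or $(\ell-1,r)$ spread, with the non-moving set replaced by its rows untouched in the block. Only the gap $r-\ell$ grows, by one per stage, which suffices since $\max(|\ell|,|r|)\ge(r-\ell)/2$.

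With this repair your equal-epoch scheme can work, since the last epoch ends at time $n$ and tracked rows give terminal values directly. However, $N_k\ge ne^{-C^k}$ is too optimistic for $d\approx(\log\log n)^2$. There a row stays untouched through an epoch of length $n/K$ only with probability $\approx e^{-d/K}$, a negative power of $\log n$. The paper instead uses blocks of doubly exponentially shrinking length totalling $o(n)$ columns. It then freezes $n^{1-o(1)}$ rows through the remaining columns using $(1-d/n)^n\ge e^{-2d}=n^{-o(1)}$.
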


\begin{remark}[Extension to other time horizons]
We have presented the square case $T = n$ above for simplicity. The upper bound \eqref{eq:main-UB} immediately extends to any $T=\Theta(n)$:
for $T<n$, run the algorithm on the available prefix; for $T>n$, restart
on consecutive blocks of at most $n$ columns and add their discrepancy
bounds. The lower bound holds for any $T \ge n$, proven later in \cref{prop:lower-horizons}.
\end{remark}

Thus, when
$d\le(\log\log n)^2$, the optimal online discrepancy is
$\Theta(\log\log n)$: up to constants, it is independent of the sparsity and
hence of the Euclidean norm of each arriving vector.  When
$d\ge(\log\log n)^2$, the optimal order is instead $\Theta(\sqrt d)$, matching the known offline bounds in this regime. Note that our upper bound controls prefix discrepancy, which is stronger than controlling the terminal discrepancy. Conversely, our lower bound controls terminal discrepancy, which is stronger than lower bounding prefix discrepancy. 

Previous constructions for lower bounds on online discrepancy in the Koml\'os setting rely on block structure in addition to the usage of signed entries \cite{optimal-online}. In contrast, we consider ``mean--field'' (delocalized) matrix ensembles with binary, i.e., non--negative, entries. Our lower bound construction is conceptually different and uncovers new obstacles to achieving the offline discrepancy bound. In particular, our random construction establishes an asymptotic gap between the average online and offline Beck--Fiala settings when $d=o((\log\log n)^2)$.

Finally, we highlight a remaining open problem. While our terminal lower bound holds for every prescribed horizon $T\ge n$, the present argument does not give a matching upper bound uniformly over such horizons. 
For comparison, Bansal and Spencer \cite{bansal-spencer} establish an online
algorithm for random dense matrices of signs achieving $O(\sqrt n)$ terminal
discrepancy with high probability at any time $T := T(n)$ fixed in advance. Determining whether this is possible for the ultra-sparse case considered here remains an interesting algorithmic challenge.

\subsection{Previous work}
The study of online discrepancy was initiated by Spencer \cite{spencer-online} in 1977. We refer the reader to the ICM survey of Bansal \cite{bansal-survey} for a general account of the algorithmic developments in discrepancy theory, both offline and online.

Regarding gaps between online and offline discrepancy, a work of Kulkarni, Reis, and Rothvoss \cite{optimal-online} established the lower bound of $\Omega(\sqrt{\log T})$ for online discrepancy in the Koml\'os setting\footnote{That is, for balancing $T$ vectors of unit Euclidean norm.} (in contrast with the conjectured dimension--free discrepancy of Koml\'os matrices in the offline setting). Their lower bound for online Koml\'os is provided by constructing a sparse block--diagonal matrix populated by coupled random signs. This work also develops an algorithm---which does not run in polynomial time---that achieves discrepancy matching the lower bound, establishing the asymptotic threshold for the worst--case online Koml\'os instances. A subsequent work of Aden-Ali \cite{aden-ali-online} achieves the same bound in time linear in the input size.

Towards studying the \textit{average case} of online discrepancy minimization, Bansal et al. \cite{bansal-online} consider the general setting of i.i.d. arrivals from arbitrary distributions. Independent arrivals from dense, homogeneous distributions are quite well understood \cite{bansal-spencer,ALS2,kim-roche}. In particular, Bansal-Spencer \cite{bansal-spencer} develop the setting of uniform Rademacher vectors, corresponding to the average--case online Spencer setting, using an algorithm that has a number of similarities with ours. 

In the average case offline setting, one natural model is to take a matrix of i.i.d. random Gaussians. This model was introduced in the statistical physics literature as a ``symmetric binary perceptron'' \cite{APZ}, and its discrepancy threshold and critical window have since been determined \cite{ALS1,dja-crit,PX,ss}. At constant aspect ratio, the Gaussian model and the model with i.i.d. Rademacher entries both have discrepancy of order $\sqrt n$, as in Spencer's setting \cite{spencer1985six}.

More relevant to our sparse setting, the average case of the offline Beck--Fiala problem has been studied using sparse random binary matrices. The algorithmic results of Bansal and Meka \cite{bansal-meka} and Potukuchi \cite{potukuchi-spectral}, discussed above, establish the validity of the Beck--Fiala conjecture for this random model. Our result is in direct contrast: the Beck--Fiala conjecture fails for the {\it online} setting, even in the average case. There is also a significant body of work on the offline discrepancy of sparse binary matrices with divergingly wide aspect ratios. In suitable parameter ranges, these matrices have constant discrepancy independent of sparsity (see \cite{dja-jnw} and references therein).

A closely related two-coordinate model is the carpool or online edge orientation problem. Here each arriving edge contributes one $+1$ and one $-1$, and choosing a sign corresponds to orienting the edge. For uniformly random arrivals, Ajtai et al. \cite{ajtai-carpool} obtained a $\Theta(\log\log n)$ bound for the expected discrepancy of the greedy algorithm. There is a substantial literature on various settings of the carpool problem; for further developments, we refer to the discussions in \cite{gupta-carpool,optimal-online}.

Finally, in the offline worst--case setting, Bansal and Jiang \cite{bansal-jiang2,bansal-jiang1} established the Beck--Fiala conjecture under a sparsity lower bound: any matrix $M\in\{0,1\}^{n\times n}$ with $d$--sparse columns has discrepancy $O(\sqrt d)$ as long as $d\ge(\log n)^2$. Their upper bound is also implementable with an efficient algorithm. We subsequently proved an online $O(\sqrt d)$ bound for every fixed input sequence when $d$ is sufficiently large \cite{altschuler-tikhomirov-bf}; we use this result to cover the larger sparsities in \cref{thm:main}.

\subsection{Proof overview} 

\subsubsection{Lower bound.} We may assume $d \le (\log \log n)^2$, since we will use the known offline lower bound of $\Omega(\sqrt{d})$ outside this regime. The central object is a \emph{spread}: two disjoint large
sets of rows, henceforth called the ``spread sets'', whose running discrepancies are $\ell$ and $r$ for some $\ell\le0\le r$.  At
the start, all rows have running discrepancy of zero. Given a spread, we expose a block of
fresh columns. With high probability, many columns create ``isolated intersections'': the support of the column contains exactly one row from each spread set; moreover, these two intersecting rows do not intersect any other columns in the freshly revealed block. Each such isolated intersection forces an outward move of one of the two rows. That is, independently of the sign chosen by the online algorithm, either the row with discrepancy $r \ge 0$ moves to $r+1$, or else the row with discrepancy $\ell \le 0$ moves to $\ell-1$.

At least half of the isolated intersections lead to one of these results; say without loss of generality that at least half of the isolated intersections lead to the row with discrepancy $r$ moving to $r+1$. At the same time, because the new vectors are so sparse, many of the rows within the spread set of discrepancy $\ell$ have no support at all within any of the revealed columns and hence retain discrepancy $\ell$. In total, we find a new spread at discrepancy values $\ell$ and $r+1$, where the two new spread sets are not too small in cardinality.

The block lengths and retained spread set sizes are chosen on a doubly exponential
scale.  After $q$ stages the gap between the discrepancy of the two spread sets is at least $q$, while each of the spread sets have cardinality at least
\[
    s_q=\frac{n}{(\log n)^{4^q}}.
\]
We may take $q$ to be a constant multiple of
$\log\log n$: the construction then uses only $o(n)$ columns and retains
$n^{1-o(1)}$ rows. A key observation is that in the ultra--sparse range
$d\le(\log\log n)^2$, it holds with high probability that many rows of this final spread have no more support in any of the remaining columns, so their discrepancies are frozen
until the end of the input. This proves the $\Omega(\log\log n)$ lower bound.

\subsubsection{Upper bound} We give a unified treatment of all sparsities $2\le d\le {n^{1/5}}$. This suffices, since a recent result of the authors \cite{altschuler-tikhomirov-bf} handles the remaining regimes by giving an $O(\sqrt d)$ online prefix discrepancy upper bound for every fixed input sequence, with high probability over the algorithm's randomness.
Roughly, we assign an increasing potential
to the discrepancy of each row and choose the sign of the next column to minimize the
sum of the potentials on the arriving support. The one-row potential is capped at a specified value $Y$; once a row's potential reaches $Y$, it is referred to as ``exceptional'' and remains so thereafter. If a column contains
one of these exceptional rows, the algorithm overrides any potential-based decision and instead greedily chooses the sign which decreases
a largest exceptional discrepancy in that column.

This immediate treatment of exceptional rows is necessary. Indeed, a quirk of the ultra-sparse regime is that if we allow a significant number of rows to have discrepancy above our final target at some time $t < T$, we cannot guarantee that all of these rows will have sufficient support in the remaining columns $X_{t+1},\dots,X_T$ to correct for their excess. On the other hand, one of our core estimates is that only a vanishing fraction of rows ever become exceptional. In particular, few columns are supported on any exceptional rows, and hence the emergency greedy decisions for exceptional rows have negligible overall effect on the discrepancy of moderate rows. That is, our algorithm runs two sub-routines simultaneously---a greedy algorithm for exceptional rows and potential-based algorithm for moderate rows---whose interaction can be controlled because exceptional rows are rare. 

A detailed, extended sketch deriving the choice of potential and bounding the evolution of moderate rows is deferred to \cref{sec:potential-overview}, after the appropriate notation has been introduced. For now, let us briefly sketch how the discrepancy growth of exceptional rows is controlled. Say that a row $i$ currently has discrepancy $\Theta$, where $\Theta$ is the least possible running value of discrepancy which is considered exceptional. In order for the discrepancy of row $i$ to further increase, the next column in which row $i$ is supported must also have another row $i'$ with the same discrepancy or worse in its support. Otherwise, the discrepancy of row $i$ would have been greedily decreased. To quantify this observation, group discrepancy levels in steps of three. A row reaching $\Theta+3(r+1)$ must have made three upward moves from height at least $\Theta+3r$, each in a column shared with another row that had already attained that height. A union bound over these three columns, followed by Markov's inequality, gives a doubly exponential decay of the number of rows reaching successive levels. Specifically, our potential analysis bounds the initial exceptional fraction by $K/[d^3(\log\log n)^6]$ for an absolute constant $K$. For a sufficiently large absolute constant $D$, the recursion then bounds, with high probability, the fraction of rows ever reaching $\Theta+3r$ by
\[
    \frac{x_0^{2^r}}{Dd^3},
    \qquad x_0:=\frac{DK}{(\log\log n)^6}.
\]
For some $r\le C\log\log n$, this fraction is less than $1/n$. Thus no row reaches $\Theta+3r$, giving the required discrepancy bound since $\Theta\le C(\sqrt d+\log\log n)$. 

\begin{remark}[Comparison with previous algorithms]
    The idea of ``superimposing'' two algorithms was previously utilized by Bansal--Spencer for balancing uniform $\{-1,+1\}^n$ vectors \cite{bansal-spencer}. There, potential minimization and majority vote are alternated at each time step. Our algorithm is morally an adaptation of these ideas to the ultra--sparse regime, but the actual implementations and analysis are substantially different.
\end{remark}

\subsection{Model and notation}

Let $\mu_d$ be the uniform measure on the binary vectors in $\{0,1\}^n$ with
exactly $d$ nonzero coordinates.  We write
\[
    A\sim\calM_{n,T,d}
\]
when the columns of the $n\times T$ matrix $A$ are independent with law
$\mu_d$.  Let
\[
    R_t:=\supp(\col_t(A)).
\]
Thus $R_t$ is a uniform $d$--subset of $[n]$.  For signs
$\sigma_1,\ldots,\sigma_t$, define
\begin{equation}\label{eq:running-discrepancy}
    S_i(t):=\sum_{s\le t}\sigma_s\ind_{\{i\in R_s\}},
    \qquad S_i(0):=0.
\end{equation}

An online algorithm may use an auxiliary random seed independent of $A$, and
$\sigma_t$ must be measurable with respect to that seed and
$R_1,\ldots,R_t$.  We write $\calF_t$
for the $\sigma$--field generated by $R_1,\ldots,R_t$
and the random seed used by the algorithm.

\subsection*{Funding}
K.T. was partially supported by NSF grant DMS 2452120.

\subsection*{Comparison with superseded preprint}
The current manuscript supersedes and replaces a preprint of the authors posted in September 2025 \cite{AT-old}. The previous manuscript proved the lower bound in \cref{thm:main} (for all sparsities), as well as the sharp upper bound for the ultra-sparse regime $2 \le d \le (\log\log n)^2/\log\log\log n$. In this regime of $d$ considered in the upper bound, only exceptional rows needed to be handled. The lower bound and the treatment of exceptional rows in the upper bound retain the arguments of the previous version, with changes to the exposition. The main mathematical addition of the present version is the treatment of moderate rows in the upper bound, which enables a sharp upper bound throughout the range $2\le d\le {n^{1/5}}$. Combined with \cite{altschuler-tikhomirov-bf}, this gives the full range in \cref{thm:main}. 

\subsection*{AI acknowledgment}
The previous, superseded draft was written entirely without AI. This corresponds to the current discrepancy lower bound as well as the upper bound for the ultra-sparse regime \cite{AT-old}. Subsequently, also without AI, we extended the upper bound to the full range of sparsity considered in the current paper under the simplifying assumption that the non-zero entries of incoming vectors are independent and uniform on $\{-1,1\}$, rather than deterministically $1$. Our argument utilized a modification of the multiscale majority algorithm \cite{ALS2,kim-roche} to handle the evolution of moderate rows. 

The authors then used ChatGPT Pro 5.5 to develop and check the manuscript. During this writing phase, GPT made two substantive contributions. First, it showed us that our analysis could be readily modified to drop the random signing assumption, specifically via \cref{lem:linear}. Second, it subsequently suggested the current presentation of the potential method for controlling moderate rows. While this is equivalent to our original multiscale algorithm mathematically, the resulting exposition is significantly cleaner.

\section{Lower bound}\label{sec:lower}

This section develops the proof of the lower bound \eqref{eq:main-LB}. For $d \ge (\log\log n)^2$, the claimed discrepancy lower bound follows immediately from an existing lower bound of $\Omega(\sqrt{d})$ on the \textit{offline} discrepancy of random low-degree set systems \cite{random-hypergraph-discrepancy}, obtained via a first moment method computation. Hence, the content of this section is to prove the discrepancy lower bound of $\Omega(\log\log n)$ in the sparsity range
$2 \le d\le(\log\log n)^2$. This is the main result of the section.

As described in the overview, the idea of the proof is to use the following object as obstructions:

\begin{definition}
Let $\ell\le0\le r$ and let $k,s$ be nonnegative integers.  We say that
$(A,\sigma)$ contains an $(\ell,r)$--spread\footnote{ This definition is internal to the present article and unrelated to notions of ``spread'' in the literature of thresholds.} of size $s$ at time $k$ if there
are disjoint sets $I^{(\ell)},I^{(r)}\subset[n]$, both of size $s$, such that
\[
    S_i(k)=\ell\quad(i\in I^{(\ell)}),
    \qquad
    S_i(k)=r\quad(i\in I^{(r)}),
\]
where the {partial} sums $S_i$ are defined by \eqref{eq:running-discrepancy}.
\end{definition}

Fix $C_0:=4$.  For $q\ge0$, let
\begin{equation}\label{eq:lower-parameters}
    s_q:=\left\lceil\frac{n}{(\log n)^{C_0^q}}\right\rceil,
    \qquad
    k_q:=\sum_{u=1}^q
       \left\lceil\frac{n}{(\log n)^{e^{u-1}}}\right\rceil,
    \qquad k_0:=0.
\end{equation}

\begin{lemma}[Growth of a spread]\label{lem:spread}
Fix any online algorithm and let
\[
    0\le q\le q_*:=\left\lfloor
       \frac{\log\log n}{2\log C_0}\right\rfloor.
\]
With probability $1-O(qn^{-0.9})$, there are integers
$\ell\le0\le r$ with $r-\ell\ge q$ such that $(A,\sigma)$ contains an
$(\ell,r)$--spread of size $s_q$ at time $k_q$.
\end{lemma}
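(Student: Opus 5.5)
We argue by induction on $q$. The base case $q=0$ is trivial: at time $k_0=0$ every $S_i(0)=0$, so any two disjoint sets of size $s_0=\lceil n/\log n\rceil$ form a $(0,0)$--spread. For the inductive step, suppose that at time $k_q$ there is an $(\ell,r)$--spread with sets $I^{(\ell)},I^{(r)}$ of size $s_q$ and $r-\ell\ge q$. These sets are $\calF_{k_q}$--measurable; if several choices exist, fix one by a deterministic rule. Next expose the block of $m:=\lceil n/(\log n)^{e^{q}}\rceil$ fresh columns $R_{k_q+1},\dots,R_{k_q+m}$. They are independent of $\calF_{k_q}$, so the spread sets may be treated as fixed and only the randomness of the new block is used. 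Call a column $t$ of the block \emph{good} if three things hold: $|R_t\cap I^{(\ell)}|=|R_t\cap I^{(r)}|=1$; the two rows $i_t\in I^{(\ell)}$ and $j_t\in I^{(r)}$ so hit lie in no other column of the block; and the good columns have distinct rows. For a good column, $\sigma_t=+1$ sends $S_{j_t}$ to $r+1$ and $\sigma_t=-1$ sends $S_{i_t}$ to $\ell-1$, whatever else the column contains. Moreover these values remain unchanged until time $k_{q+1}$.

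The plan has three estimates.

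\textbf{Count of good columns.} A single column has probability about $d^2 (s_q/n)^2$ of hitting each spread set exactly once. Here $d\le(\log\log n)^2$ and $s_q/n\ge(\log n)^{-4^q}$. The isolation condition fails with probability at most about $md/n\le d(\log n)^{-e^q}=o(1)$. So the expected number of good columns is of order $m d^2 s_q^2/n^2$. Its concentration follows from a bounded-differences (McDiarmid) inequality over the $m$ independent columns: changing one column alters the count by at most $O(d)$, which gives a failure probability far below $n^{-0.9}$.

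\textbf{Untouched rows.} The number of rows in $I^{(\ell)}$ (respectively $I^{(r)}$) met by no column of the block is at least $s_q(1-md/n)-o(s_q)\ge s_q/2$. This again holds by a bounded-differences or negative-association concentration argument.

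\textbf{Pigeonhole.} At least half of the good columns push outward on the same side, say $r\to r+1$. The new spread then consists of the $\ge G/2$ rows $j_t$ now at $r+1$, where $G$ is the number of good columns, together with the $\ge s_q/2$ untouched rows of $I^{(\ell)}$, still at $\ell$. The gap becomes $r+1-\ell\ge q+1$. The symmetric case gives $(\ell-1,r)$.

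It remains to check the arithmetic $G/2\ge s_{q+1}$, which is where the parameter choices matter. We have $G\gtrsim m s_q^2/n^2\cdot d^2 \ge n(\log n)^{-e^q-2\cdot 4^q}$. We need this to be at least $n(\log n)^{-4^{q+1}}$, i.e. $e^q+2\cdot4^q<4\cdot 4^q$, which holds with room to spare; the slack also absorbs the constants. The restriction $q\le q_*$ keeps $(\log n)^{4^q}\le(\log n)^{\sqrt{\log n}}=n^{o(1)}$. Hence all the sets have size $n^{1-o(1)}$, the concentration bounds give error $O(n^{-0.9})$ per stage, and a union bound over the stages yields $1-O(qn^{-0.9})$.

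\textbf{Main obstacle.} I expect the main obstacle to be the isolation requirement together with the good columns being adaptively exploited by the online algorithm. The point to verify is that the good-column count is a function of the block alone, given the $\calF_{k_q}$--measurable spread. Signs chosen within the block cannot affect which columns are good, and the pigeonhole step is applied after the fact. So adaptivity is harmless, but this needs to be stated carefully, including measurability of the selected spread.
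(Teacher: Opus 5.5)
Your proposal is correct and follows essentially the same route as the paper: induction over stages with a deterministically chosen $\calF_{k_q}$--measurable spread, and a fresh block in which isolated intersections force an outward move while untouched rows keep their values, followed by pigeonhole and the exponent check $e^{q}+2\cdot4^{q}\le4^{q+1}-1$. The only difference is the concentration tool: you use McDiarmid's inequality where the paper uses Chebyshev bounds with explicit covariance estimates, and this works equally well, since the good-column count changes by at most $2d+1$ when one column is changed, and $m(s_q/n)^4\ge n^{1-o(1)}$ for $q\le q_*$ makes the resulting tail negligible.
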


For the matrix $A$, say that a row ``occurs'' within a set of columns if it {has a nonzero entry in at least one of those columns}. 

\begin{lemma}\label{lem:block-estimate}
Assume $d\le(\log\log n)^2$. Let $m$ and $s$ satisfy $m,s \in [n^{0.99},\, 2n/\log n]$, and let $0\le t\le n-m$. Then, consider any $I,J \subset [n]$ which are disjoint
$\calF_t$--measurable random sets satisfying $|I|=|J|=s$ almost surely. Writing $A_u:=\col_u(A)$, consider the block
$$
    A_{t+1},\ldots,A_{t+m}.
$$
Conditionally on $\calF_t$, with probability $1-O(n^{-0.9})$, both of the
following hold:
\begin{enumerate}
\item at least $s/2$ rows of each of $I$ and $J$ do not occur in the
block;
\item at least
\(        \frac1{100}m\left(\frac{s}{n}\right)^2
   \)
columns contain exactly one row of $I$ and one row of $J$, and each of
these two rows occurs in no other column of the block.
\end{enumerate}
\end{lemma}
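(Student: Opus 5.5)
Conditioning on $\calF_t$ fixes $I$ and $J$. The block columns $A_{t+1},\dots,A_{t+m}$ are then independent of $\calF_t$ and i.i.d.\ with law $\mu_d$. So we may treat $I,J$ as deterministic disjoint sets of size $s$ and prove both claims with probability $1-O(n^{-0.9})$ over the block alone.

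\textbf{Part (1).} A fixed row occurs in a given column with probability $d/n$. Hence the number of occurrences of rows of $I$ in the block has mean $msd/n\le (2n/\log n)\,s\,(\log\log n)^2/n=o(s)$. The number $N_I$ of rows of $I$ that occur in the block is at most this count. Since $\sum_u |R_{t+u}\cap I|$ is a sum of $m$ independent variables bounded by $d$, Bernstein or Chernoff gives
\[
    \PP\bigl(N_I\ge s/2\bigr)\le \exp\bigl(-cs/d\bigr)\le \exp\bigl(-n^{0.98}\bigr).
\]
The same bound holds for $J$.

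\textbf{Part (2), expectation.} Call column $u$ \emph{good} if $|R_u\cap I|=|R_u\cap J|=1$ and the two rows it meets occur in no other block column. We first compute
\[
    \PP\bigl(|R_u\cap I|=|R_u\cap J|=1\bigr)\approx d(d-1)(s/n)^2\ge (s/n)^2 .
\]
This is a hypergeometric count, and it uses $d\ge2$ and $s\le 2n/\log n$, so that the remaining $d-2$ entries avoid $I\cup J$ with probability $1-o(1)$. Given the two rows, the chance that neither occurs in the other $m-1$ columns is at least $(1-2d/n)^{m}\ge 1-4md/n=1-o(1)$, since $md/n\le 2(\log\log n)^2/\log n$. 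Hence $\EE G\ge \tfrac12 m(s/n)^2$, where $G$ is the number of good columns. Because $m,s\ge n^{0.99}$, this mean is at least $n^{0.97}/2$.

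\textbf{Part (2), concentration.} Goodness is not independent across columns, so I would use a two-step argument. Let $G_0$ be the number of columns with exactly one row in each of $I$ and $J$. These indicators are independent, so Chernoff gives $G_0\ge \tfrac12\EE G_0$ with probability $1-\exp(-n^{0.9})$. It then suffices to show that the number $B$ of such columns that are spoiled, because their $I$-row or $J$-row appears elsewhere in the block, satisfies $B\le G_0/4$. Here $\EE B=o(\EE G_0)$, which gives the stated constant $1/100$.

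For $B$ I would apply McDiarmid's bounded-differences inequality to $G$ as a function of the $m$ independent columns. Changing one column alters $G$ by at most $1+2d$: its own status, plus spoiling or unspoiling at most $2d$ others through the rows it touches in $I\cup J$. The resulting tail is
\[
    \exp\bigl(-c(\EE G)^2/(md^2)\bigr)=\exp\bigl(-c\,m(s/n)^4/d^2\bigr).
\]
This is the step I expect to be the main obstacle. With $m(s/n)^4$ possibly as small as $n^{0.99}\cdot n^{-0.04}$ the exponent is still polynomial, but the worst case has $s/n$ as small as $n^{-0.01}$, and then the exponent is roughly $n^{0.95}/(\log\log n)^4$. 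That is fine. Still, I would double-check that the Lipschitz constant should really be $O(d)$ rather than something larger.

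If this bound fails in some corner, the fallback is Markov on $B$ combined with the Chernoff bound for $G_0$. That only yields failure probability $o(1)$, not $O(n^{-0.9})$. A better fallback is to expose the columns in two halves and count good columns in the first half relative to the second. A union bound over the two parts and the two sets then finishes the proof.
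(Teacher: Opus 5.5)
Your proof is correct, but the concentration steps take a different route from the paper's. The paper works only with second moments. In part (1) it takes the indicators $X_i$ that row $i$ does not occur in the block, checks that they have nonpositive pairwise covariances, and applies Chebyshev, giving failure probability $O(1/s)$. In part (2) it computes $\EE(Y_vY_w)$ exactly for pairs of columns, shows $\Var(Z)/\mu^2=O(n^{-0.9})$, and applies Chebyshev again.

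You instead dominate the number of occurring rows of $I$ by the total number of occurrences, a sum of $m$ independent variables in $[0,d]$, and use Bernstein/Chernoff. For part (2) you apply McDiarmid directly to $G$. Your route avoids the joint-moment computation and its error terms $1/s+d/n+d^2m/n^2$. It also gives failure probabilities of order $\exp(-n^{0.9})$, far more than the $O(n^{-0.9})$ needed for the union bound over $q_*=O(\log\log n)$ stages. The paper's route is more elementary and self-contained, needing only Chebyshev.

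Your worry about the Lipschitz constant is unfounded. Suppose column $u$ changes from $R_u$ to $R_u'$. Another column $v$ can change status only if its $I$-row or $J$-row lies in $R_u\triangle R_u'$. Each such row is the designated row of at most one column that can flip: a row occurring in two or more other block columns spoils all of them in both configurations. Hence $|G-G'|\le 2d+1$.

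Given this, the $G_0$/$B$ decomposition and both fallbacks are superfluous and should be dropped. McDiarmid applied to $G$, with $\EE G\ge \tfrac12 m(s/n)^2$ and deviation $\tfrac14 m(s/n)^2$, already gives the tail
\[
\exp\bigl(-c\,m(s/n)^4/d^2\bigr)\le \exp\bigl(-n^{0.94}\bigr),
\]
since $m(s/n)^4\ge n^{0.95}$ and $d^2\le(\log\log n)^4$.
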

\begin{proof}
Condition on $\calF_t$. This fixes $I$ and $J$, while
$A_{t+1},\ldots,A_{t+m}$ are independent and uniformly random $d$--sparse
binary columns. All probabilities and expectations below are with respect to this conditioning.

For $i\in I\cup J$, let $X_i$ be the indicator that row $i$ does not occur in
any column of the block. Then

$$
    p:=\EE X_i=\left(1-\frac dn\right)^m=1-o(1).
$$

If $i\ne j$, independence of the columns gives
\begin{align*}
\EE(X_iX_j)
&=\left(\frac{\binom{n-2}{d}}{\binom nd}\right)^m
=\left(\frac{(n-d)(n-d-1)}{n(n-1)}\right)^m \\
&{\le} \left(\frac{n-d}{n}\right)^{2m}
=(\EE X_i)(\EE X_j).
\end{align*}
Thus the $X_i$ have pairwise nonpositive covariances. Define $Z_I:=\sum_{i\in I}X_i$. Then

$$
    \EE Z_I=sp=(1-o(1))s,
    \qquad
    \Var(Z_I)\le\sum_{i\in I}\Var(X_i)\le s.
$$
Also define $Z_J = {\sum_{j \in J}X_j}$. For all sufficiently large $n$, $\EE Z_I\ge3s/4$, and hence Chebyshev's
inequality yields

$$
    \PP\{Z_I<s/2\}
    \le \PP\{|Z_I-\EE Z_I|>s/4\}
    \le \frac{16}{s}=O(n^{-.99}).
$$
A symmetric argument gives that $\PP\{Z_J < s/2\} = O(n^{-.99})$ as well. This establishes {the lemma's first assertion}.

For $v\in{\{t+1,\ldots,t+m\}}$, let $Y_v$ indicate the event in the second
assertion, namely that exactly one row from each $I$ and $J$ are supported within $A_v$, and no other columns of the block. We compute
$$
    \EE Y_v
    =\frac{s^2\binom{n-2s}{d-2}}{\binom nd}
      \left(\frac{\binom{n-2}{d}}{\binom nd}\right)^{m-1}
    =(1+o(1))d(d-1)\left(\frac{s}{n}\right)^2.
$$

The estimate is uniform in the stated range because
$d(s+m)/n=o(1)$. Note that, in particular,

$$
    \EE\sum_{v=t+1}^{t+m}Y_v
    \ge m(s/n)^2\ge n^{0.9}
$$

for all sufficiently large $n$. Further, for distinct column indices
$v,w\in{\{t+1,\ldots,t+m\}}$, the same counting gives
\begin{align*}
\EE(Y_vY_w)
&=
\frac{s^2\binom{n-2s}{d-2}}{\binom nd}
\frac{(s-1)^2\binom{n-2s}{d-2}}{\binom nd}
\left(\frac{\binom{n-4}{d}}{\binom nd}\right)^{m-2} \\
&={(\EE Y_v)(\EE Y_w)}
\left(1+O\left(\frac1s+\frac dn+\frac{d^2m}{n^2}\right)\right).
\end{align*}
Consider
$$
    Z:=\sum_{v=t+1}^{t+m}Y_v,
    \qquad
    \mu:=\EE Z.
$$
The preceding estimates imply

$$
    \frac{\Var(Z)}{\mu^2}
    \le \frac1\mu
       +C\left(\frac1s+\frac dn+\frac{d^2m}{n^2}\right)
    =O(n^{-0.9}).
$$
Finally, for all sufficiently large $n$, note that
$    \frac1{100}m\left(\frac{s}{n}\right)^2\le\frac\mu2. 
$
Hence, by Chebyshev's inequality, 
\[
    \PP\left\{Z {<} \frac1{100}m\left(\frac{s}{n}\right)^2\right\} \le \PP\{Z {<} \mu/2\} = O(n^{-0.9}).
\]
This completes the second assertion and hence the lemma.
\end{proof}

\begin{proof}[Proof of \cref{lem:spread}]
We begin by noting that uniformly for $1\le q\le q_*$,
\begin{equation}\label{eq:spread-params} 
    n^{0.99}\le s_{q-1}\le \frac{2n}{\log n},
    \qquad
    n^{0.99}\le k_q-k_{q-1}\le \frac{2n}{\log n}.
\end{equation}

Next, for each integer $0\le u\le q_*$, let $\mathcal E_u$ be the event that there
are integers $\ell\le0\le r$ with $r-\ell\ge u$ such that $(A,\sigma)$
contains an $(\ell,r)$--spread of size $s_u$ at time $k_u$. We will prove by
induction that, for a sufficiently large absolute constant $C$,
\[
    \PP(\mathcal E_u^c)\le Cu n^{-0.9}.
\]
The base case is trivial:
at time zero all rows have discrepancy zero. Since $2s_0\le n$ for all
sufficiently large $n$, the event $\mathcal E_0$ holds surely. So, fix $1\le q\le q_*$ and assume by way of induction that
\[
    \PP(\mathcal E_{q-1}^c)\le C(q-1)n^{-0.9}.
\]
Condition on $\calF_{k_{q-1}}$ and suppose
that $\mathcal E_{q-1}$ holds. We may choose 
integers $\ell\le0\le r$ with $r - \ell \ge q-1$  and disjoint sets
$I,J\subset[n]$ witnessing an $(\ell,r)$--spread of size
$s_{q-1}$ at time $k_{q-1}$. (As there are potentially multiple valid such choices of $\ell,r,I,J$,  we choose according to some arbitrary deterministic rule, fixed ahead of time). These choices are
$\calF_{k_{q-1}}$--measurable. By \eqref{eq:spread-params}, we may apply  \cref{lem:block-estimate} with
\[
    t=k_{q-1},\qquad
    m=k_q-k_{q-1},
\]
to the block $A_{k_{q-1}+1},\ldots,A_{k_q}$. Let $\mathcal J$ be the set
of columns from the second conclusion of that lemma. In each column
$v\in\mathcal J$, let $i_v^{(\ell)} \in I$ and $i_v^{(r)} \in J$ be its selected rows.
Since these rows occur only in column $v$ during the block, either
\[
    S_{i_v^{(r)}}(k_q)=r+1
    \quad\text{or}\quad
    S_{i_v^{(\ell)}}(k_q)=\ell-1,
\]
according to the sign chosen in that column. Partition the columns accordingly:
\[
    \mathcal J_+
    :=\{v\in\mathcal J:S_{i_v^{(r)}}(k_q)=r+1\},
    \qquad
    \mathcal J_-:=\mathcal J\setminus\mathcal J_+.
\]
By the pigeonhole principle, at least one of $\mathcal J_+$ and $\mathcal J_-$ has size at least
$|\mathcal J|/2$. If $|\mathcal J_+|\ge|\mathcal J|/2$, then the distinct
rows $i_v^{(r)}$, $v\in\mathcal J_+$, all have discrepancy $r+1$ at time
$k_q$, while at least $s_{q-1}/2$ rows of $I$ have discrepancy
$\ell$ at time $k_q$. Thus $(A,\sigma)$ contains an
$(\ell,r+1)$--spread of size at least
\[
    \min\left\{\frac{s_{q-1}}2,\frac{|\mathcal J|}2\right\}.
\]
If $|\mathcal J_-|\ge|\mathcal J|/2$, the same argument gives an
$(\ell-1,r)$--spread of at least this size. In either case, the difference
between the two discrepancy values has increased by one and is therefore
at least $q$. Moreover,
\[
    \frac{|\mathcal J|}{2}
    \ge c\frac{n}
        {(\log n)^{2C_0^{q-1}+e^{q-1}}}
    \ge s_q,
\]
where the final inequality holds for all sufficiently large $n$ because
\[
    2C_0^{q-1}+e^{q-1}\le C_0^q-1.
\]
The bound $s_{q-1}/2\ge s_q$ is immediate for all sufficiently large $n$.
Consequently, whenever the conclusions of \cref{lem:block-estimate} hold,
the event $\mathcal E_q$ occurs. Thus, on $\mathcal E_{q-1}$,
\[
    \PP\big(\mathcal E_q^c\mid\calF_{k_{q-1}}\big)
    \le Cn^{-0.9}.
\]
Recalling the inductive hypothesis completes the induction:
\[
    \PP(\mathcal E_q^c)
    \le \PP(\mathcal E_{q-1}^c)+Cn^{-0.9}
    \le Cq n^{-0.9}.
\]

\end{proof}

We are ready to deduce \eqref{eq:main-LB}; the proof is recorded as the following proposition. Note that our lower bound is ``robust'' in the sense that we find many rows with large discrepancy, rather than just a single row. 

\begin{proposition}[Sparse online lower bound]\label{prop:sparse-lower}
Let $2\le d\le(\log\log n)^2$ and $A\sim\calM_{n,n,d}$.  For every online
algorithm, with probability
$1-O(n^{-0.8})$ there are more than $n^{1/2}$ rows $i$ such that
\[
    |S_i(n)|\ge c_0\log\log n,
    \qquad c_0:=\frac1{8\log4}.
\]
\end{proposition}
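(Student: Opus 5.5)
We apply \cref{lem:spread} with $q=q_*=\lfloor \log\log n/(2\log C_0)\rfloor$. With probability $1-O(q_*n^{-0.9})=1-O(n^{-0.8})$ this gives integers $\ell\le 0\le r$ with $r-\ell\ge q_*$, and disjoint sets $I^{(\ell)},I^{(r)}$ of size $s_{q_*}$ with $S_i(k_{q_*})\in\{\ell,r\}$ on them. Since $r-\ell\ge q_*$, at least one of $|\ell|,|r|$ is at least $q_*/2$. For large $n$ we have $q_*/2\ge \log\log n/(4\log 4)-1\ge c_0\log\log n$, with room to spare: $c_0=1/(8\log 4)$ is half of $1/(4\log4)$. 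Call the corresponding set $I^*$; it is $\calF_{k_{q_*}}$-measurable, since the spread was chosen by a fixed deterministic rule. It then suffices to show that many rows of $I^*$ never occur in the remaining columns $k_{q_*}+1,\ldots,n$. Such rows have $S_i(n)=S_i(k_{q_*})$, which holds regardless of the algorithm's later choices.

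For the sizes, we use $C_0^{q_*}\le (\log n)^{1/2}$, so that
\[
    s_{q_*}\ge \frac{n}{(\log n)^{\sqrt{\log n}}}=n\exp\bigl(-\sqrt{\log n}\,\log\log n\bigr)=n^{1-o(1)}.
\]
Also $k_{q_*}=o(n)$, although this is not needed. Condition on $\calF_{k_{q_*}}$. The remaining $m\le n$ columns are fresh and independent, so for $i\in I^*$ the indicator $X_i$ of "row $i$ does not occur in the remaining columns" satisfies
\[
    \EE X_i=(1-d/n)^m\ge (1-d/n)^n\ge e^{-2d}\ge e^{-2(\log\log n)^2}=n^{-o(1)}.
\]
This step is exactly where the hypothesis $d\le(\log\log n)^2$ is used. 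As in the proof of \cref{lem:block-estimate}, the $X_i$ have pairwise nonpositive covariances. Hence $Z=\sum_{i\in I^*}X_i$ has
\[
    \EE Z\ge s_{q_*}e^{-2d}=n^{1-o(1)},\qquad \Var Z\le \EE Z .
\]
Chebyshev's inequality then gives $\PP(Z<\EE Z/2)\le 4/\EE Z=O(n^{-0.9})$. On the complementary event there are at least $\EE Z/2=n^{1-o(1)}>n^{1/2}$ rows with $|S_i(n)|\ge c_0\log\log n$.

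Combining the failure probability $O(n^{-0.8})$ from \cref{lem:spread} with the $O(n^{-0.9})$ from the freezing step proves the proposition.

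The main point requiring care is the quantitative bookkeeping. We must check that $s_{q_*}e^{-2d}$ is polynomially large, which it is because both $(\log n)^{C_0^{q_*}}$ and $e^{2d}$ are $n^{o(1)}$. We must also check that the measurability of $I^*$ justifies conditioning on $\calF_{k_{q_*}}$ before exposing the fresh columns.
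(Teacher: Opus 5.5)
Your proposal is correct and follows the paper's proof essentially step for step. You apply \cref{lem:spread} at $q=q_*$, take the $\calF_{k_{q_*}}$-measurable spread set whose common discrepancy has absolute value at least $q_*/2\ge c_0\log\log n$, and then use pairwise nonpositive covariances with Chebyshev's inequality to show that $n^{1-o(1)}>n^{1/2}$ of its rows never occur again. You are also right that $k_{q_*}=o(n)$ is not used beyond $k_{q_*}\le n$, which \cref{lem:spread} already requires, since the argument only needs $(1-d/n)^{n-k_{q_*}}\ge(1-d/n)^n$.
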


\begin{proof}
Apply \cref{lem:spread} with $q=q_*$. Its failure probability is at most
\[
    O(q_*n^{-0.9})=O(n^{-0.8}).
\]
On the complementary event $\mathcal E_{q_*}$, choose, according to a fixed
deterministic rule, integers $\ell\le0\le r$ and disjoint sets
$I,J$ witnessing the resulting spread. Since
$r-\ell\ge q_*$,
\[
    \max\{|\ell|,|r|\}\ge \frac{q_*}{2}
    \ge c_0\log\log n.
\]
So either the rows of $I$ or of $J$ have
discrepancy at time $k_{q_*}$ of at least $c_0\log\log n$; say it is $I$ without loss of generality. Then
\[
    |S_i(k_{q_*})|\ge c_0\log\log n
    \qquad\text{for every }i\in I.
\]
Since $C_0^{q_*}\le\sqrt{\log n}$,
\[
    |I|\ge \frac{n}{(\log n)^{\sqrt{\log n}}}=n^{1-o(1)}.
\]

Also $k_{q_*}=o(n)$. Let $U$ be the number of rows of $I$ which do not occur
in any of the columns after time $k_{q_*}$, and write
\[
    \mu:=\EE\big[U\mid\calF_{k_{q_*}}\big].
\]
Almost surely on $\mathcal E_{q_*}$, for all sufficiently large $n$
\[
    \mu
    =|I|\left(1-\frac dn\right)^{n-k_{q_*}}
    \ge |I|\left(1-\frac dn\right)^n
    \ge |I|e^{-2d}
    =n^{1-o(1)}
    \ge n^{0.9}.
\]

By the same calculation as in the proof of
\cref{lem:block-estimate}, conditionally on $\calF_{k_{q_*}}$ the
corresponding indicators (that each row $i\in I$ is not supported in the remaining columns of the matrix after time $k_{q_*}$) have nonpositive covariance. Hence, almost surely
on $\mathcal E_{q_*}$,
\[
    \Var\big(U\mid\calF_{k_{q_*}}\big)\le\mu.
\]
Since $n^{1/2}\le\mu/2$ for all sufficiently large $n$, Chebyshev's
inequality gives that almost surely on $\mathcal E_{q_*}$,
\[
\begin{aligned}
    \ind_{\mathcal E_{q_*}}
    \PP\big\{U\le n^{1/2}\mid\mathcal F_{k_{q_*}}\big\}
    &\le
    \ind_{\mathcal E_{q_*}}
    \PP\big\{|U-\mu|\ge\mu/2\mid\mathcal F_{k_{q_*}}\big\} \\
    &\le
    \frac{4\ind_{\mathcal E_{q_*}}
    \Var(U\mid\mathcal F_{k_{q_*}})}{\mu^2} \\
    &\le \frac{4\ind_{\mathcal E_{q_*}}}{\mu}
    \le 4n^{-0.9}\ind_{\mathcal E_{q_*}}.
\end{aligned}
\]
Taking expectations and using that
$\mathcal E_{q_*}\in\mathcal F_{k_{q_*}}$, we obtain
\[
    \PP\big(\mathcal E_{q_*}\cap\{U\le n^{1/2}\}\big)
    \le 4n^{-0.9}.
\]
On $\mathcal E_{q_*}\cap\{U>n^{1/2}\}$, more than $n^{1/2}$ rows of $I$
do not occur in any remaining column.  Their discrepancies therefore do not
change after time $k_{q_*}$, and each has terminal absolute discrepancy at
least $c_0\log\log n$.  Consequently, the desired event has probability at
least
\[
    1-\PP(\mathcal E_{q_*}^{\,c})
      -\PP\big(\mathcal E_{q_*}\cap\{U\le n^{1/2}\}\big)
    =1-O(n^{-0.8}).
\]

\end{proof}

We now extend our lower bound to hold for arbitrarily long time horizons, $T \ge n$. 
\begin{proposition}[Extension of lower bound]
\label{prop:lower-horizons}
There is an absolute constant $c>0$ such that the following holds.  Let
$2\le d\le n/2$, let $T\ge n$, and let $A\sim\calM_{n,T,d}$.  Then every
online algorithm satisfies, with probability $1-o(1)$,
\[
    \left\|\sum_{s=1}^T\sigma_s\col_s(A)\right\|_\infty
    \ge c\big(\sqrt d+\log\log n\big).
\]
In particular, \eqref{eq:main-LB} remains valid, with the summation over $[T]$ instead of $[n]$.
\end{proposition}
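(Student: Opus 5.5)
We split into the two regimes $d\ge(\log\log n)^2$ and $2\le d\le(\log\log n)^2$. In the first regime, $\sqrt d\ge\log\log n$, so it suffices to show that the terminal discrepancy is $\Omega(\sqrt d)$. For this we invoke the offline lower bound of \cite{random-hypergraph-discrepancy}: for $A\sim\calM_{n,T,d}$, the offline discrepancy is at least $c\sqrt d$ with probability $1-o(1)$. Any online signing is in particular some signing, so the terminal sum is bounded below by the offline discrepancy. The one point to check is that the first-moment computation covers every $T\ge n$, not just $T=n$. If only the square case is available, we reduce to it as follows. Adding columns makes each row sum a sum of more terms, and the first-moment bound (expected number of signings with all row sums at most $c\sqrt d$) only becomes smaller as $T$ grows. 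So we would rerun that computation, or cite its general-aspect-ratio version, rather than try a monotonicity trick on discrepancy, which is false in general.

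In the second regime we adapt \cref{prop:sparse-lower}. First apply \cref{lem:spread} with $q=q_*$ at time $k_{q_*}=o(n)\le T$. With probability $1-O(n^{-0.8})$ this yields a set $I$ with $|I|=n^{1-o(1)}$ whose rows all have $|S_i(k_{q_*})|\ge c_0\log\log n$. The frozen-rows argument no longer works once $T\gg n$, because every row then eventually occurs in later columns. Instead we apply the whole lower-bound construction to the \emph{final} window of columns. Set $t_0:=T-n$ and run the spread argument on columns $t_0+1,\ldots,T$. \cref{lem:spread} and \cref{lem:block-estimate} are stated conditionally on $\calF_t$ and never use that discrepancies start at zero, except in the base case.

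This forces a modification of the base case. At time $t_0$ we need two disjoint sets of size $s_0\approx n/\log n$ on which $S_i(t_0)$ takes constant values $\ell_0\le r_0$. Pigeonhole gives this provided the values $S_i(t_0)$ are concentrated on few levels. If instead some row already has $|S_i(t_0)|\ge c\log\log n$, we still have to guarantee it stays large, so this branch is not by itself enough. A cleaner route is to measure the spread relative to starting values. Choose $I_0$ with a common value $v$ of $S_i(t_0)$ of size at least $n/(2L+1)$, where $L$ bounds the relevant discrepancy range; otherwise more than $n^{1/2}$ rows exceed $L$ at time $t_0$. Then use a single level set for both spread sets, so $\ell=r=v$. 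The induction increases $r-\ell$ by one per stage, giving rows at distance at least $q_*/2$ from $v$ on one side. The absolute value is at least $q_*/2-|v|$ on one side and at least $q_*/2$ on the other, and since $\ell\le v\le r$ with $r-\ell\ge q_*$, one of $|\ell|,|r|$ is at least $q_*/2$ regardless of $v$.

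Finally, as in \cref{prop:sparse-lower}, rows of the final spread set are frozen over the remaining $n-k_{q_*}$ columns of the window, with at least $n^{1/2}$ such rows by the same Chebyshev estimate. This gives terminal discrepancy at least $c_0\log\log n$.

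The main obstacle is the base case at time $t_0$: finding two disjoint level sets of size about $n/\log n$. Take $L:=\log n$. If at most $n^{1/2}$ rows have $|S_i(t_0)|>L$, then pigeonhole gives a level set of size at least $n/(3\log n)$, which we split in half to obtain $I,J$ with a common value $v$. This only changes $s_0$ by a constant factor, which the doubly exponential parameters absorb. If instead many rows have $|S_i(t_0)|>L$, then already $\|\sum_{s\le t_0}\sigma_sA_s\|_\infty>L$. But we must lower bound the discrepancy at time $T$, and these rows can still be corrected, so this branch needs more care. We would handle it by taking the level-set choice at the largest scale $L$ where a large level set exists, and accept the first case as the generic one; we expect this step to need the most adjustment.
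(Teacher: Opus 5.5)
Your argument for $d\ge(\log\log n)^2$ fails once $T\gg n$. Bounding the online terminal sum below by $\disc$ of the full $n\times T$ matrix cannot give $\Omega(\sqrt d)$ with a constant uniform in $T$, because offline discrepancy of $\calM_{n,T,d}$ drops as $T/n$ grows. The introduction recalls that sufficiently wide random sparse matrices have discrepancy independent of $d$.

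Your claim that the first-moment bound only decreases with $T$ is backwards. There are $2^T$ signings, and for a fixed signing each row sum (about $Td/n$ terms) lands in $[-c\sqrt d,c\sqrt d]$ with probability of order $c\sqrt{n/T}$. So the expected count is roughly $2^T(c\sqrt{n/T})^n$, which grows with $T$; at $T=\alpha n$ the certifiable constant decays like $2^{-\alpha}$.

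The missing idea is to use onlineness. Put $t_0:=T-n$ and condition on $\calF_{t_0}$. The last $n$ columns form $B\sim\calM_{n,n,d}$ independent of the shift $x:=S(t_0)$, and $\|S(T)\|_\infty\ge\min_\tau\|x+B\tau\|_\infty$. You therefore need a shift-uniform bound $\sup_x\PP\{\min_\tau\|x+B\tau\|_\infty<\frac{c_1}{3}\sqrt d\}=o(1)$, with $c_1$ the offline constant. The paper derives this from the offline bound for $\calM_{n,2n,d}$ by a doubling trick. If $x+B_1\tau_1$ and $x+B_2\tau_2$ are both small for independent copies $B_1,B_2$, then $[\tau_1,-\tau_2]$ signs $[B_1\;B_2]$ with discrepancy below $\frac{2c_1}{3}\sqrt d$, so $p_x^2\le\varepsilon_n$.

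For $d\le(\log\log n)^2$, your level-set base case at time $t_0$ is a workable alternative. The induction in \cref{lem:spread} only needs $\ell\le r$, and losing a constant factor in $s_0$ is harmless. However, you leave open the branch where more than $n^{1/2}$ rows have $|S_i(t_0)|>\log n$, because you believe freezing fails for $T\gg n$. Freezing over the \emph{final} window does work. Each row avoids all of the last $n$ columns with probability $(1-d/n)^n\ge e^{-2d}=n^{-o(1)}$. The nonpositive-covariance Chebyshev bound from \cref{lem:block-estimate} then shows that, with high probability, one such row is untouched, so $|S_i(T)|=|S_i(t_0)|>\log n$. This closes your dichotomy.

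The paper's route is shorter. It applies \cref{prop:sparse-lower} verbatim to the increments $S_i(T)-S_i(t_0)$, which are the running sums of an online algorithm on the final block started from zero. This gives more than $n^{1/2}$ rows with increment at least $c_0\log\log n$. If at most $n^{1/2}$ rows have $|S_i(t_0)|\ge\frac{c_0}{2}\log\log n$, the triangle inequality finishes; otherwise the same freezing argument does. No modified base case for the spread induction is needed.
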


\begin{proof}
Let $t_0:=T-n$ and write
\[
    S(t):=(S_1(t),\ldots,S_n(t)).
\]
First suppose that $d\le(\log\log n)^2$, and condition on $\calF_{t_0}$.  The final
$n$ columns are independent of $\calF_{t_0}$, and the restriction of the
algorithm to these columns is an online algorithm whose auxiliary data
include the fixed (i.e., $\mathcal{F}_{t_0}$ measurable) preceding history.  Thus, applying
\cref{prop:sparse-lower} to the final block, it holds with conditional probability
$1-O(n^{-0.8})$ that the set
\[
    L:=\left\{i\in[n]:
       |S_i(T)-S_i(t_0)|\ge c_0\log\log n\right\}
\]
has size greater than $n^{1/2}$. Next, consider the set of rows that already incur large discrepancy at $t_0$:
\[
    L_0:=\left\{i\in[n]:
       |S_i(t_0)|\ge\frac{c_0}{2}\log\log n\right\}.
\]
If $|L_0|\le n^{1/2}$ and $|L|>n^{1/2}$, then some $i\in L\setminus L_0$,
and hence
\[
    |S_i(T)|
    \ge |S_i(T)-S_i(t_0)|-|S_i(t_0)|
    \ge\frac{c_0}{2}\log\log n.
\]

Suppose instead that $|L_0|>n^{1/2}$.  For $i\in L_0$, let $X_i$ indicate
that row $i$ does not occur in any of the final $n$ columns, and put
$U:=\sum_{i\in L_0}X_i$.  Conditional on $\calF_{t_0}$,
\[
    \mu:=\EE[U\mid\calF_{t_0}]
    =|L_0|\left(1-\frac dn\right)^n
    \ge n^{1/2}e^{-2d}
    =n^{1/2-o(1)}.
\]
Computing identically to the proof of \cref{lem:block-estimate}, the $X_i$ have conditionally
nonpositive pairwise covariances, so $\Var(U\mid\calF_{t_0})\le\mu$. Chebyshev's inequality then implies that almost surely on $\{|L_0|>n^{1/2}\}$,
\[
{\color{black}\PP\{U=0\mid\calF_{t_0}\}\le\frac1\mu=o(1).}
\]
Thus, with high probability,
some row $i$ of $L_0$ is untouched in the final block and satisfies
\[
    |S_i(T)|=|S_i(t_0)|
    \ge\frac{c_0}{2}\log\log n.
\]
Hence the terminal discrepancy is lower bounded by $\frac{c_0}{2}\log\log n$ with high probability whenever $d\le(\log\log n)^2$.

It remains to consider
$d>(\log\log n)^2$. The offline lower bound \cite{random-hypergraph-discrepancy}, valid for constant aspect ratios and $d \to \infty$,
gives for $A \sim \mathcal{M}_{n,2n,d}$ the existence of an absolute constant $c_1>0$ with
\[
    \varepsilon_n:=
    \PP\left(\disc(A)<c_1\sqrt d\right)=o(1).
\]

We first deduce from this a lower bound in the
presence of an arbitrary fixed initial discrepancy.  Let $B_1,B_2$ be
independent matrices with law $\calM_{n,n,d}$, and define
\[
    \varepsilon_n:=
    \PP\left\{\disc([B_1\;B_2])
        <c_{1}\sqrt d\right\}=o(1).
\]
For a fixed vector $x\in\RR^n$ and $\ell \in {\{1,2\}}$, let
\[
    \mathcal{E}_{x,\ell} := \left\{
       \min_{\tau\in\{-1,1\}^n}
       \|x+B_\ell\tau\|_\infty
       <\frac{c_{1}}{3}\sqrt d
    \right\}, \qquad p_x:=
    \PP(\mathcal{E}_{x,\ell}).
\]
Here, $p_x$ does not depend on $\ell$ as $B_1,B_2$ have the same distribution. Note that if $\mathcal{E}_{x,1}$ and $\mathcal{E}_{x,2}$ both hold, certified by some signings $\tau_1$ and $\tau_2$, respectively, then
the concatenation $[\tau_1,-\tau_2]\in\{-1,1\}^{2n}$ is also a signing of the columns of
$[B_1\;B_2]$ certifying:
\[
\begin{aligned}
    \disc([B_1,B_2]) \le \|B_1\tau_1-B_2\tau_2\|_\infty
    &=\|(x+B_1\tau_1)-(x+B_2\tau_2)\|_\infty < \frac{2c_1}{3} \sqrt{d}.
\end{aligned}
\]
Hence, by independence of $\mathcal{E}_{x,1}$ and $\mathcal{E}_{x,2}$, we obtain $p_x^2\le\varepsilon_n$.
The right-hand side does not depend on $x$, so
\begin{equation}\label{eq:shifted-offline-lower}
    \sup_{x\in\RR^n}p_x
    \le\sqrt{\varepsilon_n}=o(1).
\end{equation}

We now apply \eqref{eq:shifted-offline-lower} to deduce the discrepancy lower bound. Let $A \sim \mathcal{M}_{n,T,d}$ for $T \ge n$, and denote the final block of $n$ columns by $B$, and denote the signs selected by the online algorithm for $B$ by 
\[
    \rho:=(\sigma_{t_0+1},\ldots,\sigma_T).
\]
Then, 
\[
\begin{aligned}
    \left\{\|S(T)\|_\infty
       <\frac{c_1}{3}\sqrt d\right\}
    &=
    \left\{\|S(t_0)+B\rho\|_\infty
       <\frac{c_1}{3}\sqrt d\right\} \subseteq
    \left\{
       \min_{\tau\in\{-1,1\}^n}
       \|S(t_0)+B\tau\|_\infty
       <\frac{{c_1}}{3}\sqrt d
    \right\}.
\end{aligned}
\]
Hence from \eqref{eq:shifted-offline-lower}, almost surely,
\[
\begin{aligned}
    \PP\left\{
       \|S(T)\|_\infty<\frac{c_{1}}{3}\sqrt d
       \,\middle|\,\calF_{t_0}\right\}
    &\le p_{S(t_0)}
     \le\sqrt{\varepsilon_n}
     =o(1).
\end{aligned}
\]
{Taking expectations shows that, with probability $1-o(1)$,}
\begin{equation}\label{eq:long-horizon-sqrtd}
    \|S(T)\|_\infty
    \ge\frac{{c_1}}{3}\sqrt d{\color{black}.}
\end{equation}
Hence, for any fixed online algorithm, it holds with high probability whenever $d>(\log\log n)^2$ that ${\|S(T)\|_\infty} = \Omega(\sqrt{d})$. This completes the proposition.
\end{proof}

\section{Upper bound}\label{sec:potential-algorithm}

This section is dedicated to proving the following result.
\begin{proposition}[Random sparse upper bound]\label{prop:sparse-upper}
There is an absolute constant $C>0$ 
with the following property.
Let $2\le d\le n^{1/5}$, and let $A\sim\calM_{n,n,d}$. Then there is an online algorithm for selecting signs $\sigma_1,\dots,\sigma_n$
satisfying
\[
    \PP\left\{
    \max_{t\le n}\left\|\sum_{s\le t}\sigma_s\col_s(A)\right\|_\infty
    >C(\sqrt d+\log\log n)\right\}
    =O\big((\log\log n)^{-6}\big).
\]
\end{proposition}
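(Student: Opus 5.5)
We will prove \cref{prop:sparse-upper} with the two-regime algorithm described in the overview. Fix a threshold $\Theta:=C_1(\sqrt d+\log\log n)$ and a capped, increasing, even potential $\Phi$ on integer discrepancies. A natural choice is $\Phi(x)=\min\{\cosh(\lambda x),Y\}$ with $\lambda\asymp 1/\sqrt d$ (or $\lambda\asymp 1/\log\log n$ when $d$ is small), where $Y=\cosh(\lambda\Theta)$. A row $i$ is \emph{exceptional} at time $t$ if $|S_i(s)|\ge\Theta$ for some $s\le t$. When $R_t$ contains an exceptional row, $\sigma_t$ is chosen to decrease $|S_i|$ for an exceptional $i\in R_t$ of maximal $|S_i(t-1)|$. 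Otherwise $\sigma_t$ minimizes $\sum_{i\in R_t}\Phi(S_i(t-1)+\sigma_t)$. The proof then has two parts, each holding on a high-probability event.

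\textbf{Step 1: moderate rows.} We track $\Psi(t):=\sum_i\Phi(S_i(t))$ and aim to show
\[
\EE\,\Psi(t)\le n\,(1+O(1)),\qquad\text{hence}\qquad \#\{\text{exceptional rows}\}\le \frac{Kn}{d^3(\log\log n)^6},
\]
by Markov's inequality, for a suitable choice of $C_1$.

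For a potential-driven step, the increment is
\[
\min_\pm\sum_{i\in R_t}\bigl[\Phi(S_i\pm1)-\Phi(S_i)\bigr].
\]
Write each summand as a linear term $\pm\Phi'(S_i)$ plus a quadratic term bounded by $\lambda^2\Phi(S_i)$. Choosing the better sign gives $-|\sum_{i\in R_t}\Phi'(S_i)|$ for the linear part. Since $R_t$ is a uniform $d$-set, its expectation is at most $-c\sqrt d$ times a typical $|\Phi'|$. This is where \cref{lem:linear} is used to compare $\EE|\sum_{i\in R_t}\Phi'(S_i)|$ with $\frac{d}{n}\sum_i|\Phi'(S_i)|$, or with its $\ell^2$ analogue. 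The quadratic part contributes $\frac{d}{n}\lambda^2\Psi$. With $\lambda\asymp 1/\sqrt d$, the drift gain should absorb the second-order loss for rows of large $|S_i|$. This yields a recursion
\[
\EE\Psi(t+1)\le\Bigl(1+\frac{O(1)}n\Bigr)\EE\Psi(t)+O\Bigl(\frac dn\Bigr)n\lambda^2,
\]
so $\EE\Psi(n)=O(n)$ times a constant. The greedy steps modify moderate rows only through columns meeting exceptional rows. A bootstrap (stopping time at the first moment the exceptional fraction exceeds its target) shows these columns are an $O(d\cdot\text{fraction})$ proportion. Their effect on $\Psi$ is therefore a lower-order additive error.

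\textbf{Step 2: exceptional rows.} This follows the overview sketch. Consider a row that rises from $\Theta+3r$ to $\Theta+3(r+1)$. Each of its three upward moves occurs in a column containing another row already at height at least $\Theta+3r$. Let $N_r$ be the number of rows ever reaching $\Theta+3r$, and condition on the ordering. Each column meets a fixed row with probability $d/n$ and meets the level-$r$ set with probability at most $dN_r/n$. A union bound over the three columns and a first-moment/Markov estimate then gives
\[
\EE N_{r+1}\lesssim n\,d^3\Bigl(\frac{N_r}{n}\Bigr)^2 .
\]
The $d^3$ arises because a row appears in about $d$ columns over the horizon. Iterating yields the fraction $x_0^{2^r}/(Dd^3)$, where $x_0=DK/(\log\log n)^6$. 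This drops below $1/n$ after $r=O(\log\log n)$ levels. The accumulated failure probabilities sum to $O((\log\log n)^{-6})$, dominated by the first Markov step.

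\textbf{Main obstacle.} The main difficulty will be Step 1. We must choose $\Phi$ and $\lambda$ so that the linear drift provably beats the quadratic term uniformly in $2\le d\le n^{1/5}$, including the regime $d<(\log\log n)^2$ where $\Theta$ is governed by $\log\log n$. We must also make the dependence arguments rigorous, since the greedy steps correlate with the potential steps. For both, we will first try a single stopping time combined with the conditional-expectation estimate from \cref{lem:linear}.
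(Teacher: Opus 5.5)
There is a genuine gap in Step 1, at the point you flag as the main obstacle: your capped $\cosh$ potential cannot produce the input Step 2 needs.

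\textbf{Why the capped $\cosh$ fails.} With your parameters $\lambda\Theta\asymp C_1$ in both regimes. So the cap $Y=\cosh(\lambda\Theta)=e^{O(C_1)}$ is an absolute constant, and $\Psi\le Kn$ yields only $|E|\le Kn/Y$. That is a constant fraction of the rows, not $Kn/(d^3(\log\log n)^6)$, and no choice of $C_1$ fixes this.

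The fraction genuinely has to be below about $d^{-3}$. The three-witness recursion $\mu_k\lesssim(Cd^2\mu_{k-1})^3$ contracts only when $\mu_{k-1}\ll d^{-3}$. So the potential must grow by a factor $\approx d^3(\log\log n)^6$ before height $O(\sqrt d+\log\log n)$.

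For $(\log\log n)^2\le d\le n^{1/5}$, a pure $\cosh$ cannot do that while keeping the drift bounded. Reaching that growth would need $\lambda\sqrt d\gtrsim\log d$. Now take half the rows at $+\lceil1/\lambda\rceil$ and half at $-\lceil1/\lambda\rceil$, so $\Psi\asymp n$. There a column costs $\asymp d\lambda^2$ in curvature but gains only $\asymp\lambda\sqrt d$ from the sign choice.

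Conversely, with $\lambda\asymp1/\sqrt d$ your recursion $\EE\Psi(t+1)\le(1+O(1)/n)\EE\Psi(t)+O(1)$ holds even for uniformly random signs. So it certifies nothing beyond $O(e^{-cx}n)$ rows above $x\sqrt d$.

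\textbf{The missing idea.} This is the two-piece potential \eqref{eq:F-def}. It uses the inverse-square profile $(\rho^2/(\rho^2-k^2))^2$ with $\rho=a(\sqrt d+2)$ up to $b=\lfloor\rho-a\rfloor$, where $F(b)\asymp d$. It then continues as an exponential at \emph{constant} rate $\lambda\asymp1/a$, capped at $Y=d^3(\log\log n)^6$, so that $\Theta\le Ca(\sqrt d+\log Y)$.

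In the bulk, three ingredients let the linear gain absorb the curvature once $a\ge C\sqrt K$: \eqref{eq:second-difference-large-1}--\eqref{eq:second-difference-large-2}, the two-scale lower bound of \cref{lem:linear}, and Cauchy--Schwarz against $\sum_iF\le Kn$. Beyond $b$ there are only $O(Kn/d)$ rows. A column therefore samples $O(K)$ of them, and $\tanh(\lambda/2)\lesssim1/a$ suffices.

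Your ``lower-order'' claim for greedy columns also depends on $Y\gg d^2$. Such columns occur with probability at most $Kd/Y$ and cost at most $CKd/a$ in expectation, so their contribution is negligible only when $Y$ is this large.

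\textbf{Further repairs.}
First, $\sum_i\Phi(S_i(t))$ forgets rows that became exceptional and were then pushed back down by the greedy rule. Track $Y|E_t|+\sum_{i\notin E_t}F(S_i(t))$ instead.

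Second, Markov's inequality at a fixed time gives neither control of $\max_t$ nor small failure probability with bounded $K$. And $K$ must stay bounded, since $a\ge C\sqrt K$ and $\Theta\propto a$. The paper stops at the first crossing of $Kn$ and applies Freedman's inequality, using $n/(dY)\ge n^{1/5}/(\log\log n)^6$.

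Finally, in Step 2 three shared columns give $n(Cd^2\mu)^3$, not $nd^3\mu^2$. The iteration with $\mu=x_{k-1}/(Dd^3)$ still goes through unchanged.
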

Here, the choice $d\le n^{1/5}$ is for notational convenience; {the same argument gives the upper bound with probability $1-o(1)$ whenever $d^4(\log\log n)^6=o(n)$}. Our worst-case results in \cite{altschuler-tikhomirov-bf} cover the regime $d \ge \log(n)^{2+o(1)}$ anyway. 

\subsection{The algorithm}

Below, we assume that $a$ is
a sufficiently large absolute constant to be chosen later.
Let
\begin{equation}\label{eq:Y-def}
    Y:=d^3(\log\log n)^6,
\end{equation}
which will be used as the threshold in potential function for separating ``moderate'' and ``exceptional'' rows. Towards constructing the potential function, define
\begin{equation}\label{eq:potential-parameters}
    \rho:=a(\sqrt d+2),\qquad
    b:=\lfloor\rho-a\rfloor,\qquad
    \lambda:=2\log\frac{\rho^2-b^2}{\rho^2-(b+1)^2}.
\end{equation}
For $k\in\ZZ$, define the one-row potential by
\begin{equation}\label{eq:F-def}
F(k):=
\begin{cases}
\displaystyle
\min\!\left\{
    Y,\,
    \left(\frac{\rho^2}{\rho^2-k^2}\right)^2
\right\},
    & |k|\le b,\\[1.1em]
\displaystyle
\min\!\left\{
    Y,\,
    \left(\frac{\rho^2}{\rho^2-b^2}\right)^2
    e^{\lambda(|k|-b)}
\right\},
    & |k|>b.
\end{cases}
\end{equation}
The definition of $\lambda$ is exactly so that the two formulas match at $|k|=b+1$. As expanded upon below, both parts of the piece-wise formula for $F$ are familiar: the inverse-square potential is inspired by \cite{bansal-spencer}, while the exponential potential is {\color{black}comparable to a rescaling} of the hyperbolic-cosine potential introduced in \cite{spencer-online}.

Recalling that $Y$ was the threshold for exceptional rows in terms of potential, we also can define a corresponding equivalent threshold in terms of discrepancy:
\begin{equation}\label{eq:Theta-def}
    \Theta:=\min\{k\ge0:F(k)=Y\}.
\end{equation}

\begin{algorithm}[H]
\caption{Online balancing}\label{alg:main}
\begin{algorithmic}[1]
\State Initialize $S_i(0)=0$ for every $i\in[n]$, and set $E_0=\emptyset$.
\For{$t=1,\ldots,n$}
    \State Reveal $R_t=\supp(\col_t(A))$.
    \If{$R_t\cap E_{t-1}\ne\emptyset$}
        \State Choose
        $i_*\in\arg\max_{i\in R_t\cap E_{t-1}}|S_i(t-1)|$,
        breaking ties deterministically.
        \State Set $\sigma_t=-\sign(S_{i_*}(t-1))$, where $\sign(0):=1$.
    \Else
        \State Choose $\sigma_t$, breaking ties deterministically, according to
        \Statex
        \begin{equation}\label{eq:algorithm-potential-choice}
        \sigma_t\in\arg\min_{\sigma\in\{-1,+1\}}
            \sum_{i\in R_t}F(S_i(t-1)+\sigma).
        \end{equation}
    \EndIf
    \State Update
    $S_i(t):=S_i(t-1)+\sigma_t\ind_{\{i\in R_t\}}$
    for every $i\in[n]$.
    \State Set $E_t:=\{i:\max_{s\le t}|S_i(s)|\ge\Theta\}$.
\EndFor
\end{algorithmic}
\end{algorithm}
The sets $E_t$ will be referred to as {\it exceptional} rows.
Note that once reaching $\Theta$, a row
remains exceptional even if its discrepancy later decreases.
If a column contains an exceptional row, the
algorithm chooses the sign which decreases the absolute discrepancy
of a largest such row, unless that discrepancy is zero.
On the other hand, a column containing no exceptional row is signed to minimize the new sum of its row potentials.  

\subsection{Overview of the potential}\label{sec:potential-overview}
We now heuristically derive and analyze our choice of potential function, closely following an analogous computation of Bansal and Spencer \cite[Section~2.1]{bansal-spencer}. For an even potential $F$ and a sign $\sigma\in\{-1,+1\}$,
\begin{equation}\label{eq:guiding-decomposition}
 F(s+\sigma)-F(s)
 =\frac{F(s+1)+F(s-1)-2F(s)}2
 +\sigma\frac{F(s+1)-F(s-1)}2.
\end{equation}
These are the discrete quadratic and linear terms. For an incoming
column support $R$ at time $t$, let
\begin{align*}
 Q_R&:=\sum_{i\in R}\big(F(S_i(t-1)+1)+F(S_i(t-1)-1)-2F(S_i(t-1))\big),\\
 L_R&:=\sum_{i\in R}\big(F(S_i(t-1)+1)-F(S_i(t-1)-1)\big).
\end{align*}
If $R$ contains no exceptional row, the algorithm makes the change
$(Q_R-|L_R|)/2$. Thus the available decrease $|L_R|$ must absorb the
positive part of the ``curvature cost'' $Q_R$.

We first consider rows with absolute discrepancy $s \le b$, so that the inverse-square potential is at play. The rows with small discrepancy, say $s \le \rho/2$, {\color{black}each} contribute at most $C/(a^2d)$ to $Q_R$; and hence their total contribution to {\color{black}$Q_R$} is bounded above by $C/a^2$. Next, say that there are $M$ rows with absolute discrepancy $s \in [\rho/2, b]$. {\color{black}Denote their expected contribution to $Q_R$ and the expected absolute value of their contribution to $L_R$ by $\Delta Q_R$ and $\Delta L_R$, respectively.} Write $\delta:=\rho-s$, $u:=F(s)$, and let $m:=dM/n$
be their expected number in a fresh support.
The first and second differences have sizes $u/\delta$ and $u/\delta^2$.

For $m\ge1$, we claim
\[
 {\color{black}\Delta L_R \gtrsim\frac{u}{\delta}\sqrt m},
 \qquad
 \Delta Q_R\ \sim\frac{u}{\delta^2}m.
\]
Proving {\color{black}this lower bound}, done in \cref{lem:linear}, is one of the major technical challenges; {\color{black}it does} \textit{not} follow from a CLT argument\footnote{It would, however, if each non-zero entry of the matrix had an iid random sign, as in Bansal-Spencer \cite{bansal-spencer}. See AI disclosure.}, as $(\mathrm{sign}(S_i({\color{black}t-1})))_{i \in [n]}$ are not independent. Instead, we use symmetrization and convexity {\color{black}to obtain a bound valid for arbitrary signs of the running discrepancies}.  

If the total potential is at most $Kn$, then $Mu\le Kn$, giving
\begin{equation}\label{eq:guiding-difference-scales}
 \frac{\Delta Q_R}{\Delta L_R}
 {\color{black}\lesssim}\frac{\sqrt m}{\delta}
 \le\sqrt{\frac{Kd}{u\delta^2}}.
\end{equation}
The inverse-square formula was chosen because
\begin{equation}\label{eq:guiding-choice}
 u\delta^2=\frac{\rho^4}{(\rho+s)^2}
 \ge\frac{\rho^2}{4}\ge\frac{a^2d}{4}.
\end{equation}
The ratio in \eqref{eq:guiding-difference-scales} is therefore at most order $\sqrt K/a$. 

Finally, we consider the case $m\le1$. The probability of sampling exactly one of these rows
is of order $m$, giving $\Delta L_R \sim mu/\delta$.
Consequently,
\[
    \frac{\Delta Q_R}{\Delta L_R}
 \sim\frac{mu/\delta^2}{mu/\delta}=\frac1\delta \le \frac{1}{a}.
\]
The sampling estimate in \cref{lem:linear} is used in making all of the above analysis of the inverse-quadratic potential rigorous.

In the previous estimate, if $\delta$ approaches zero, $\Delta Q_R/\Delta L_R$ diverges, which would cause us to lose control of the change in potential. This necessitates the switch in row-potential at $b$, where
$\rho-b\in[a,a+1)$ and $cd\le F(b)\le Cd$. Up to constant factors and
rescaling, the chosen continuation is the familiar hyperbolic-cosine potential
used in online discrepancy
\cite{spencer-online,bansal-online,bansal-spencer,matrixspencer-cosine}. Indeed, for $k \in (b,\Theta-1)$, 
\[
 \frac{F(k+1)-F(k-1)}2=F(k)\sinh\lambda,
 \qquad
 \frac{F(k+1)+F(k-1)-2F(k)}2=F(k)(\cosh\lambda-1).
\]
The key point is that their ratio stays small: 
\[
 \frac{\cosh\lambda-1}{\sinh\lambda}
 =\tanh(\lambda/2)\le\frac Ca.
\]
The same sampling calculation gives that {\color{black}the ratio of expected curvature cost to expected available decrease} is at most
$C/a$ for $m\le1$, and $C\sqrt K/a$ for $1\le m\le CK$. Indeed, if the total potential is bounded by $Kn$, there
are at most $CKn/d$ rows beyond the elbow at $b$, so a fresh column
samples at most $CK$ of them on average. 
Finally, note as a sanity check that our definition of moderate row does not have excessive discrepancy: for $\lambda$ is between $c/a$ and $C/a$,
\[
 \Theta=b+\left\lceil\lambda^{-1}\log\frac{Y}{F(b)}\right\rceil
 \le Ca(\sqrt d+\log Y).
\]

It remains to bound the number of rows that ever become exceptional. We cannot simply track the potential due to the subtlety that exceptional rows stay exceptional for all time by definition, whereas their running discrepancy (and contribution to the potential) can decrease to moderate levels. Instead, we track
\[
 \Phi_t:=Y|E_t|+\sum_{i\notin E_t}F(S_i(t)),
\]
so that exceptional rows always contribute $Y$ even if their discrepancy falls,
yielding $|E_t|\le\Phi_t/Y$.

As long as $\Phi_{t-1}\le Kn$ and $a$ is sufficiently large compared with
$\sqrt K$, the preceding estimates show that the main increase in potential comes from rows with ${\color{black}|k|} \le \rho/2$, 
\[
 \EE[\Phi_t-\Phi_{t-1}\mid\calF_{t-1},R_t\cap E_{t-1}=\emptyset]
 \le \frac{C}{a^2}.
\]
A column meets $E_{t-1}$ with probability at most $Kd/Y$; conditional
on this event, its expected increase in potential ({\color{black}by the bounds on both first and second differences}) is at most
$CKd/a$. Hence
\[
 \EE[\Phi_t-\Phi_{t-1}\mid\calF_{t-1}]
 \le\frac{C}{a^2}+\frac{CK^2d^2}{aY}
 =\frac{C}{a^2}+\frac{CK^2}{ad(\log\log n)^6}.
\]
For fixed $a,K$, the last term tends to zero.
Starting from $\Phi_0=n$, we subtract this drift bound per step and stop
when $\Phi_t>Kn$. Freedman's inequality then gives, with high probability,
\[
 \max_{t\le n}\Phi_t\le Kn,
 \qquad
 |E_n|\le\frac{Kn}{Y}=\frac{Kn}{d^3(\log\log n)^6}.
\]
The factor $d^{-3}$ supplies the initial bound needed for the union
bound over triples of columns in \cref{sec:above-threshold}.

\subsection{Finite-difference estimates}
Write the centered first and second finite differences as
\begin{equation}\label{eq:finite-differences}
 \nabla F(k):=\frac{F(k+1)-F(k-1)}2,
 \qquad
 \Delta F(k):=\frac{F(k+1)+F(k-1)-2F(k)}2.
\end{equation}
Thus \eqref{eq:guiding-decomposition} becomes the exact identity
\begin{equation}\label{eq:finite-difference-identity}
    F(k+\sigma)-F(k)=\Delta F(k)+\sigma\nabla F(k).
\end{equation}

We begin with a key technical estimate giving bounds on the increase of the row potential in various regimes, making rigorous the previous heuristic estimates.

\begin{lemma}[Finite-difference estimates]\label{lem:potential}
Assuming $a$ is sufficiently large, the following estimates hold with
absolute constants $c,C>0$:
\begin{equation}\label{eq:lambda-theta-bounds}
    \frac{c}{a}\le\lambda\le\frac{C}{a},
    \qquad
    \Theta\le Ca(\sqrt d+\log Y).
\end{equation}
For every $k\in\ZZ$,
\begin{equation}\label{eq:first-difference-bound}
    |\nabla F(k)|\le\frac{C}{a}F(k).
\end{equation}
Moreover,
\begin{align}
 (\Delta F(k))_+&\le\frac{C}{a^2d},
       && |k|\le\rho/2,\label{eq:second-difference-small}\\
 (\Delta F(k))_+&\le\frac{C}{a}|\nabla F(k)|,
       && |k|>\rho/2,\label{eq:second-difference-large-1}\\
 (\Delta F(k))_+&\le
       C|\nabla F(k)|\sqrt{\frac{F(k)}{a^2d}},
       && |k|>\rho/2.\label{eq:second-difference-large-2}
\end{align}
Finally,
\begin{equation}\label{eq:second-by-first}
 |\Delta F(k)|\le|\nabla F(k)|\quad(k\ne0),
 \qquad
 |\Delta F(0)|\le\frac{C}{a^2d}.
\end{equation}
\end{lemma}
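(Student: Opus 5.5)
The plan is a direct case analysis on $|k|$, using evenness of $F$ to reduce to $k\ge0$. I split into three regimes: the inverse-square regime $0\le k\le b$, the exponential regime $b<k$ with $F(k)<Y$, and the cap region where $F\equiv Y$. The transitions at $k=b,b+1$ and at $\Theta-1,\Theta$ are handled by one-sided checks. Writing $G(k):=\rho^4/(\rho^2-k^2)^2$, we have $F=\min\{Y,G\}$ on $|k|\le b$. The choice of $\lambda$ makes the exponential branch agree with $G$ at $|k|=b$ and $|k|=b+1$. The identity $\rho^2-b^2=(\rho-b)(\rho+b)$ with $\rho-b\in[a,a+1)$ gives $F(b)\asymp d$ (since $\rho^2/(\rho^2-b^2)\asymp\rho/a\asymp\sqrt d$).

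\textbf{Step 1: the parameter bounds \eqref{eq:lambda-theta-bounds}.} Write $\lambda=2\log\bigl(1+\frac{2b+1}{\rho^2-(b+1)^2}\bigr)$. Since $\rho^2-(b+1)^2=(\rho-b-1)(\rho+b+1)\asymp a\rho$ and $2b+1\asymp\rho$, the argument of the logarithm is $1+\Theta(1/a)$, so $\lambda\asymp1/a$. Then $\Theta\le b+1+\lambda^{-1}\log(Y/F(b))+1\le\rho+Ca\log Y$. This is at most $Ca(\sqrt d+\log Y)$, because $\rho=a(\sqrt d+2)$ and $\log Y\ge 1$.

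\textbf{Step 2: the first-difference bound and \eqref{eq:second-by-first}.} In the inverse-square regime, use $G'/G=4k/(\rho^2-k^2)$ together with $\rho^2-k^2\ge a\rho$ for $k\le b+1$. The mean value theorem then gives $|\nabla G(k)|\le \sup_{[k-1,k+1]}|G'|\le C G(k)/a$. Here we use that $G(k\pm1)\le (1+C/a)G(k)$ on this range, which holds for the same reason. In the exponential regime we have $|\nabla F|=F\sinh\lambda\le CF/a$. In the cap region one neighbour may be clipped; clipping only decreases $|\nabla F|$ relative to the uncapped function at points where $F(k)=Y$, and near the cap the bound $F(k+1)-F(k)\le (e^\lambda-1)F(k)$ still holds. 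For \eqref{eq:second-by-first}, take $k\ge1$. In the uncapped regions $F$ is increasing and convex. Then $\Delta F\le\nabla F$ amounts to $F(k)-F(k-1)\ge0$, and $-\Delta F\le\nabla F$ amounts to $F(k+1)\ge F(k)$; both hold by monotonicity, including at the cap. At $k=0$ we have $\Delta F(0)=F(1)-F(0)=(\rho^2/(\rho^2-1))^2-1\le C/\rho^2\le C/(a^2d)$.

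\textbf{Step 3: the second-difference bounds.} For $|k|\le\rho/2$, bound $\Delta F(k)\le\sup|G''|$ on $[k-1,k+1]$. Since $G''\asymp G(1/(\rho^2-k^2)+k^2/(\rho^2-k^2)^2)$ and $G\asymp1$ here, this gives \eqref{eq:second-difference-small}. For $|k|>\rho/2$ in the inverse-square regime, set $\delta=\rho-k\ge a$; then $\nabla G\asymp G/\delta$ and $\Delta G\lesssim G/\delta^2$. This yields \eqref{eq:second-difference-large-1} directly, and it yields \eqref{eq:second-difference-large-2} via $\frac1\delta=\sqrt{\frac{1}{\delta^2}}\le\sqrt{\frac{4G}{\rho^2}}$, using the identity $G\delta^2=\rho^4/(\rho+k)^2\ge\rho^2/4$. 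In the exponential regime $\Delta F/\nabla F=\tanh(\lambda/2)\le C/a$, which gives \eqref{eq:second-difference-large-1}. For \eqref{eq:second-difference-large-2} it suffices that $F(k)\ge F(b)\ge cd$, so that $\sqrt{F/(a^2d)}\ge c/a$. At the caps, $\Delta F$ can only decrease because clipping is concave.

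\textbf{Main obstacle.} The boundary cases are the main bookkeeping burden. At $k=b$ and $k=b+1$ the stencil mixes the two formulas, and near $\Theta$ the cap cuts through the stencil. I would handle these by noting that the exponential continuation is bounded above by the inverse-square formula in the relevant range. I expect this to follow from log-convexity of $G$ together with the matching of the two branches at $b$ and $b+1$. Consequently the true stencil values are dominated by those of whichever smooth formula applies at the centre, with the same $\nabla F$ up to constants.
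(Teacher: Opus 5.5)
Your proposal is correct and follows essentially the same route as the paper: you use the same split into inverse-square stencils ($k\le b$), pure exponential stencils ($b<k\le\Theta-2$), and the capped points $k=\Theta-1,\Theta$, together with the same derivative formulas for $G$, the identity $G\delta^2=\rho^4/(\rho+k)^2$, the ratio $\tanh(\lambda/2)$, and monotonicity of $F$ for \eqref{eq:second-by-first}. Two small points: the junction you flag as the main obstacle is vacuous, because matching at both $b$ and $b+1$ (together with $G(b+1)\le 16d<Y$) puts every stencil entirely in one branch, so no log-convexity comparison is needed; and at $k=\Theta-1$ clipping shrinks $\nabla F$ as well as $\Delta F$, so your claim of the ``same $\nabla F$ up to constants'' needs the paper's observation that $(u-v)_+/(u+v)$ is nondecreasing in the clipped forward increment $u$ (equivalently, the unclipped backward increment already gives $\nabla F(\Theta-1)\ge\frac12(1-e^{-\lambda})F(\Theta-1)$).
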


\begin{proof}
By evenness, it suffices to consider nonnegative integers $k$.
Throughout the proof, $c,C>0$ are absolute constants, and we take $a\ge4$.
Let
\[
    g(x):=\left(\frac{\rho^2}{\rho^2-x^2}\right)^2,
    \qquad |x|<\rho.
\]
Since $a\le\rho-b<a+1$, we have
\[
 \frac{1}{2a}
 \le\frac{2b+1}{(\rho-b-1)(\rho+b+1)}
 \le\frac{3}{a},
 \qquad
 \frac{d}{16}\le g(b)\le g(b+1)\le16d.
\]
The identity
\[
 \lambda=2\log\left(1+
   \frac{2b+1}{(\rho-b-1)(\rho+b+1)}\right)
\]
therefore gives $1/(2a)\le\lambda\le6/a$.
In the range $2\le d\le {n^{1/5}}$, we have $Y>16d$, so
$\Theta\ge b+2$ and
\[
 \Theta=b+\left\lceil\lambda^{-1}\log\frac{Y}{g(b)}\right\rceil
 \le Ca(\sqrt d+\log Y).
\]
This proves \eqref{eq:lambda-theta-bounds}. We now treat the inverse-square,
{\color{black}hyperbolic-cosine}, and truncated parts of the potential in turn.

For $0\le k\le b$, the three values $F(k-1),F(k),F(k+1)$ agree with
$g(k-1),g(k),g(k+1)$, including at $k=b$ because
$g(b)e^\lambda=g(b+1)$. Differentiating gives
\begin{equation}\label{eq:g-derivatives}
 \frac{g'(x)}{g(x)}=\frac{4x}{\rho^2-x^2},
 \qquad
 \frac{g''(x)}{g(x)}
   =\frac{4\rho^2+20x^2}{(\rho^2-x^2)^2}.
\end{equation}
Since $\rho-k\ge a\ge4$, these formulas imply, for $|x-k|\le1$,
\begin{equation}\label{eq:g-local-bounds}
 \frac{g(k)}{C}\le g(x)\le Cg(k),\qquad
 |g'(x)|\le\frac{Cg(k)}{\rho-k},\qquad
 0\le g''(x)\le\frac{Cg(k)}{(\rho-k)^2}.
\end{equation}
Using the integral identities
\[
 \nabla F(k)=\frac12\int_{-1}^{1}g'(k+t)\,dt,
 \qquad
 \Delta F(k)=\frac12\int_{-1}^{1}(1-|t|)g''(k+t)\,dt,
\]
we obtain
\[
 |\nabla F(k)|\le\frac{C}{a}F(k),
 \qquad
 0\le\Delta F(k)\le\frac{CF(k)}{(\rho-k)^2}.
\]
If $k\le\rho/2$, then $F(k)\le16/9$ and $\rho-k\ge\rho/2$, proving
\eqref{eq:second-difference-small}. If $\rho/2<k\le b$, then
\eqref{eq:g-derivatives} also gives
$g'(x)\ge cg(k)/(\rho-k)$ for $|x-k|\le1$. Consequently,
\[
 \Delta F(k)\le\frac{C}{\rho-k}|\nabla F(k)|.
\]
Both remaining second-difference bounds follow from
\[
 \frac{1}{\rho-k}\le\frac1a,
 \qquad
 \sqrt{F(k)}=\frac{\rho^2}{(\rho-k)(\rho+k)}
 \ge\frac{a\sqrt d}{2(\rho-k)}.
\]

For $b<k\le\Theta-2$, {\color{black}the three potential values $F(k-1),F(k),F(k+1)$ satisfy $F(k\pm1)=e^{\pm\lambda}F(k)$.} Hence
\[
 \nabla F(k)=F(k)\sinh\lambda,
 \qquad
 \Delta F(k)=F(k)(\cosh\lambda-1).
\]
Since $\lambda\le6/a$, it follows that
\[
 |\nabla F(k)|\le\frac{C}{a}F(k),
 \qquad
 \Delta F(k)=\tanh(\lambda/2)|\nabla F(k)|
 \le\frac{C}{a}|\nabla F(k)|.
\]
Moreover, $F(k)\ge g(b)\ge d/16$, so
$1/a\le4\sqrt{F(k)/(a^2d)}$. This also proves
\eqref{eq:second-difference-large-2} in this range.

At $k=\Theta-1$, truncation only decreases the forward increment
$u:=F(k+1)-F(k)$, leaving $v:=F(k)-F(k-1)>0$ unchanged.
Both $\nabla F(k)=(u+v)/2$ and
\[
 \frac{(\Delta F(k))_+}{\nabla F(k)}
 =\frac{(u-v)_+}{u+v}
\]
are nondecreasing in $u$. Thus the bounds just proved for the {\color{black}hyperbolic-cosine} potential remain valid here. At $k=\Theta$,
\[
 \Delta F(k)=-\nabla F(k),\qquad
 0\le\nabla F(k)=\frac{Y-F(k-1)}2
 \le\frac{e^\lambda-1}{2}F(k-1)\le\frac{C}{a}F(k).
\]
All differences vanish for $k>\Theta$. This completes the proof of
\eqref{eq:first-difference-bound}--\eqref{eq:second-difference-large-2}.

Finally, for $k\ge1$, let
$u:=F(k+1)-F(k)\ge0$ and $v:=F(k)-F(k-1)\ge0$.
Then
\[
 |\Delta F(k)|=\frac{|u-v|}{2}
 \le\frac{u+v}{2}=|\nabla F(k)|.
\]
At $k=0$, the bound follows from \eqref{eq:second-difference-small}.
This proves \eqref{eq:second-by-first}.
\end{proof}

\subsection{Bounding the number of exceptional rows}\label{sec:exceptional-count}

The next estimate quantifies the decrease available from the sign choice,
even when the row increments have very different sizes. The key idea is to decompose the change in potential into two separate scales. A fresh column support
samples at most one of the largest $\lfloor N/d\rfloor$ row increments in absolute value on
average, yielding the linear term in the estimate. The remaining row increments contribute through the square-root term.

\begin{lemma}[Sampling estimate]\label{lem:linear}
Let $U$ be a set of size $N$, let $R$ be a uniformly random $d$--subset of
$U$, and assume $2\le d\le N/2$.  Put $p:=d/N$ and
$r:=\lfloor1/p\rfloor$.  For arbitrary real numbers $(c_i)_{i\in U}$, let
$c_1^*\ge\cdots\ge c_N^*$ be the decreasing rearrangement of $(|c_i|)$.  Then
\begin{equation}\label{eq:linear-estimate}
 \EE\left|\sum_{i\in R}c_i\right|
 \ge c\left(
   p\sum_{j\le r}c_j^*
   +\left(p\sum_{j>r}(c_j^*)^2\right)^{1/2}
 \right)
\end{equation}
for an absolute constant $c>0$.
\end{lemma}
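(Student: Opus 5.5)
Since the right-hand side of \eqref{eq:linear-estimate} is a sum of two nonnegative terms, it suffices to prove each lower bound separately, up to a constant factor; the maximum then controls half the sum. Write $H\subset U$ for the indices of the $r$ largest $|c_i|$ and $L:=U\setminus H$. The tool for both bounds is symmetrization in the form of a \emph{conditional} Jensen/convexity step: conditioning on part of the sample and averaging the remaining part can only decrease $\EE|\cdot|$ if the averaged part enters linearly with a centred law. Since the entries of $R$ are not independent, I would first realise $R$ via a random pairing. Draw $2d$ distinct elements and a uniformly random perfect matching of them into $d$ pairs. Pick one element from each pair by independent fair coins $\varepsilon_1,\dots,\varepsilon_d$. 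The result is still a uniform $d$--subset of $U$. Given the pairs $\{x_k,y_k\}$, we have $\sum_{i\in R}c_i=\sum_k \frac{c_{x_k}+c_{y_k}}2+\sum_k\varepsilon_k\frac{c_{x_k}-c_{y_k}}2$. Jensen in the shift, i.e.\ $\EE|a+Z|\ge \EE|Z|$ for symmetric $Z$, then gives
\[
 \EE\Bigl|\sum_{i\in R}c_i\Bigr|\ge \EE\Bigl|\sum_{k\le d}\varepsilon_k\tfrac{c_{x_k}-c_{y_k}}2\Bigr|\ge c\,\EE\Bigl(\sum_k (c_{x_k}-c_{y_k})^2\Bigr)^{1/2},
\]
the last step by Khintchine with $p=1$.

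\textbf{Square-root term.} I would restrict the Khintchine expression to pairs in which exactly one element lies in $L$ and the other is ``small'', with absolute value at most the median level. The cleanest route is to bound $\EE(\sum_k(\cdot)^2)^{1/2}$ from below via $\EE X^{1/2}\ge (\EE X)^{3/2}/(\EE X^2)^{1/2}$, the Paley--Zygmund/H\"older trick. Here $\EE\sum_k (c_{x_k}-c_{y_k})^2\gtrsim p\sum_{j}(c_j^*)^2-$(correction from the mean). The second moment is controlled because entries outside $H$ satisfy $(c_j^*)^2\le \frac1r\sum_{i\le r}(c_i^*)^2$, so no single term dominates. The annoying point is that $c_{x}-c_{y}$ can cancel. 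To handle this I would instead pair each element with a uniformly random \emph{other} element of $U$, not in the sample. Equivalently, compare $R$ with an independent resample: $\EE|\sum_R c|\ge \frac12\EE|\sum_R c-\sum_{R'}c|$ for an independent copy $R'$. Using $\EE_{y}(c_x-c_y)^2\ge \frac12 c_x^2$ after subtracting the mean of $c$ (shifting all $c_i$ by a constant only helps after symmetrization), this yields $\EE\sum_k(c_{x_k}-c_{y_k})^2\gtrsim p\sum_{j>r}(c_j^*)^2$ minus lower-order terms. I expect this bookkeeping, making the restriction to $L$ and the removal of cancellations rigorous, to be the main obstacle.

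\textbf{Linear term.} Condition on the event that $|R\cap H|=1$. Since $|H|=r\le N/d$, this event has probability at least a constant times $1$, and conditionally the unique element $h\in R\cap H$ is uniform on $H$. Now swap roles. Conditional on $R\setminus\{h\}$, pair $h$ with a uniformly random element $h'$ of $H$, and use the symmetrization above applied only to the coordinate $h$. This gives $\EE|\sum_R c|\gtrsim \EE|c_h-c_{h'}|$, where $h,h'$ are i.i.d.\ uniform on $H$, together with a contribution from $\EE|c_h|$ when the values of $c$ on $H$ do not all share nearly the same signed value. If the values of $c$ on $H$ are nearly equal, say to some $\gamma$, I would bound directly: $\EE|\sum_R c|\ge \PP(|R\cap H|=1)\,\EE|\gamma+\sum_{R\cap L}c|$ minus the probability-$O(p^2r^2)$ corrections, compared against the event $|R\cap H|=0$. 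Combining the two events via the triangle inequality $|\gamma|\le |\gamma+W|+|W|$, where $W$ has nearly the same law under both events, gives $\gtrsim |\gamma|\approx \frac1r\sum_{j\le r}c_j^*$. Since $p\,r\approx1$, this is the desired $p\sum_{j\le r}c_j^*$. The coupling of $W$ under the two conditionings is routine because $d\le N/2$.
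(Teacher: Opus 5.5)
There is a genuine gap, and it is the cancellation issue you flag but leave open. Your step $\EE|a+Z|\ge\EE|Z|$ throws away the pair-sum term $A/2$, where $A:=\sum_{i\in Q}c_i$ and $Q$ is the $2d$-set. What remains, $Z:=\sum_k(c_{x_k}-c_{y_k})^2$, is unchanged when a constant is added to every $c_i$, whereas the right-hand side of \eqref{eq:linear-estimate} is not. For $c_i\equiv\gamma$ one has $Z\equiv0$, while the right-hand side is of order $\sqrt d\,|\gamma|$ (and the left-hand side is $d|\gamma|$).

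Your repair via an independent resample and centering produces another shift-invariant quantity, since both sums have $d$ terms, so it cannot recover this. The remark that ``shifting only helps after symmetrization'' is not a valid reduction, because the target inequality itself changes under the shift. Restricting to a sub-collection of pairs only decreases $Z$, so that does not help either.

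The missing idea is to keep \emph{both} conditional lower bounds given the pairing: $|A|/2$ (Jensen over the coins) and $\sqrt{Z}/(2\sqrt2)$ (symmetry and Khintchine). Combine them through the exact identity $\EE[Z\mid Q]=(2dV^2-A^2)/(2d-1)$, where $V^2:=\sum_{i\in Q}c_i^2$, together with $Z\le 2V^2$. If $|A|\ge V$, the first bound is at least $V/2$. Otherwise $\EE[Z\mid Q]\ge V^2$, hence $\EE[\sqrt{Z}\mid Q]\ge \EE[Z\mid Q]/(\sqrt2 V)\ge V/\sqrt2$. Since $R\subset Q$, this yields $\EE|\sum_{i\in R}c_i|\ge\frac14\EE(\sum_{i\in R}c_i^2)^{1/2}$, which is the reduction the paper makes.

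After this reduction no cancellations remain, and both terms follow from elementary lower bounds on the sampled Euclidean norm. The bookkeeping you anticipate disappears. Relabel so that $|c_j|=c_j^*$.
\begin{itemize}
\item For the linear term, Cauchy--Schwarz gives $(\sum_{i\in R}c_i^2)^{1/2}\ge J^{-1/2}\sum_{j\le r,\,j\in R}c_j^*$ with $J:=|R\cap[r]|$. Then $\EE[J\mid j\in R]\le2$ and Jensen give a bound of at least $p\sum_{j\le r}c_j^*/\sqrt2$.
\item For the tail, your H\"older bound works provided $c^*_{r+1}\le T:=(p\sum_{j>r}(c_j^*)^2)^{1/2}$, since then $p\sum_{j>r}(c_j^*)^4\le T^4$. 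If instead $c^*_{r+1}>T$, then $p\sum_{j\le r}c_j^*\ge prc^*_{r+1}\ge T/2$ because $pr\ge1/2$.
\end{itemize}
Your stated control $(c_j^*)^2\le\frac1r\sum_{i\le r}(c_i^*)^2$ is not the right comparison, because that $\ell_2$-average is not bounded by the square of the linear term.

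Your separate linear-term argument also does not close as written. The ``$O(p^2r^2)$ corrections'' are not small, since $pr\in(1/2,1]$. The coupling of $W$ with the sum on $\{|R\cap H|=0\}$ is not routine either. The latter contains an extra summand from $L$ of absolute value up to $c^*_{r+1}$, which can be as large as the target $\frac1r\sum_{j\le r}c_j^*$. The triangle-inequality comparison is therefore circular exactly in the nearly constant case it was meant to handle.
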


\begin{proof}
{
We first compare the sampled sum with the sampled Euclidean norm:
\begin{equation}\label{eq:sample-l2}
 \EE\left|\sum_{i\in R}c_i\right|
 \ge\frac14\,G,
 \qquad
 G:=\EE\left(\sum_{i\in R}c_i^2\right)^{1/2}.
\end{equation}
Choose a uniform $2d$--subset $Q$ of $U$, pair its elements uniformly at
random, and independently choose one endpoint of each pair. The resulting
set $R$ is a uniform $d$--subset of $U$. Conditional on $Q$, put
\[
 A:=\sum_{i\in Q}c_i,\qquad V^2:=\sum_{i\in Q}c_i^2,
 \qquad Z:=\sum_{\{u,v\}\text{ a pair}}(c_u-c_v)^2.
\]
For fixed pairs, the sampled sum has the form
$A/2+\frac12\sum_{\{u,v\}}\varepsilon_{uv}(c_u-c_v)$, with independent
uniform signs $\varepsilon_{uv}$. Jensen's inequality bounds its expected
absolute value below by $|A|/2$. Symmetry of the signed sum and
Khintchine's inequality give a second lower bound $\sqrt Z/(2\sqrt2)$.
Consequently,
\[
 \EE\left[\left|\sum_{i\in R}c_i\right|\mathrel{\Big|}Q\right]
 \ge\max\left\{\frac{|A|}{2},
                 \frac{\EE[\sqrt Z\mid Q]}{2\sqrt2}\right\}.
\]
Every pair of distinct elements of $Q$ is matched with probability
$1/(2d-1)$, so
\[
 Z\le2V^2,
 \qquad
 \EE[Z\mid Q]
 =\frac{1}{2d-1}\sum_{\{u,v\}\subset Q}(c_u-c_v)^2
 =\frac{2dV^2-A^2}{2d-1}.
\]
If $|A|\ge V$, the first lower bound is at least $V/2$. Otherwise,
$\EE[Z\mid Q]\ge V^2$ and
$\EE[\sqrt Z\mid Q]\ge\EE[Z\mid Q]/(\sqrt2 V)\ge V/\sqrt2$.
Thus the conditional expectation is always at least $V/4$.
Averaging and using $R\subset Q$ proves \eqref{eq:sample-l2}.

Relabel the coordinates so that $|c_j|=c_j^*$, and write
\[
 H:=p\sum_{j\le r}c_j^*,
 \qquad T^2:=p\sum_{j>r}(c_j^*)^2.
\]
For $J:=|R\cap[r]|$, Cauchy--Schwarz gives
\[
 \left(\sum_{i\in R}c_i^2\right)^{1/2}
 \ge\frac{\sum_{j\le r}c_j^*\ind_{\{j\in R\}}}{\sqrt J},
\]
where the right hand side is zero when $J=0$. For each $j\le r$,
\[
 \EE[J\mid j\in R]
 =1+\frac{(r-1)(d-1)}{N-1}\le2.
\]
Taking expectations and applying Jensen's inequality to $x^{-1/2}$
therefore gives $G\ge H/\sqrt2$.

It remains to bound $T$. We may assume $T>0$. If $c_{r+1}^*>T$, then
$H\ge prc_{r+1}^*\ge T/2$, since $pr\ge1/2$, and hence
$H+T\le3\sqrt2G$. Otherwise, put
$W:=\sum_{j>r}(c_j^*)^2\ind_{\{j\in R\}}$. The probability that two
distinct coordinates both belong to $R$ is at most $p^2$, whence
\[
 \EE W=T^2,
 \qquad
 \EE W^2\le T^4+p\sum_{j>r}(c_j^*)^4\le2T^4.
\]
H\"older's inequality now yields
\[
 G\ge\EE\sqrt W
 \ge\frac{(\EE W)^{3/2}}{(\EE W^2)^{1/2}}
 \ge\frac{T}{\sqrt2}.
\]
In this case $H+T\le2\sqrt2G$. Combining either bound with
\eqref{eq:sample-l2} proves \eqref{eq:linear-estimate}.
}
\end{proof}

Define
\begin{equation}\label{eq:Phi-def}
    \Phi_t:=Y|E_t|+\sum_{i\notin E_t}F(S_i(t)),
\end{equation}
so that
\begin{equation}\label{eq:E-by-Phi}
    |E_t|\le\Phi_t/Y.
\end{equation}
When a row first becomes exceptional, its one-row potential is exactly $Y$.
Consequently,
\begin{equation}\label{eq:Phi-increment}
 \Phi_t-\Phi_{t-1}
 =\sum_{i\notin E_{t-1}}
    \big(F(S_i(t))-F(S_i(t-1))\big).
\end{equation}
{
We will fix the absolute constant $K$ later. First consider columns
containing no exceptional row, for which the algorithm minimizes the potential.
}

\begin{lemma}[Columns containing no exceptional row]\label{lem:no-exceptional}
There are universal constants $C,c>0$ (independent of $a$ and $K$) with the following property.
If $a\ge C\sqrt K$ and $n$ is sufficiently large then, almost surely on the event
$\{\Phi_{t-1}\le Kn\}$,
\begin{equation}\label{eq:no-exceptional-drift}
 \EE[\Phi_t-\Phi_{t-1}\mid
      \calF_{t-1},R_t\cap E_{t-1}=\emptyset]
 \le\frac{C}{a^2}
 -c\,\EE\left[
   \left|\sum_{i\in R_t}\nabla F(S_i(t-1))\right|
   \mathrel{\Big|}\calF_{t-1},R_t\cap E_{t-1}=\emptyset
 \right].
\end{equation}
In particular, on this event the conditional drift is at most $C/a^2$.
\end{lemma}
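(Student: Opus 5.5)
Condition on $\calF_{t-1}$ and on the event $R_t\cap E_{t-1}=\emptyset$; then $R_t$ is a uniform $d$--subset of $U:=[n]\setminus E_{t-1}$, with $N:=|U|\ge n-Kn/Y\ge n/2$. Write $k_i:=S_i(t-1)$. Since no row of $R_t$ is exceptional, \eqref{eq:Phi-increment} and \eqref{eq:finite-difference-identity} give
\[
 \Phi_t-\Phi_{t-1}=\sum_{i\in R_t}\Delta F(k_i)+\sigma_t\sum_{i\in R_t}\nabla F(k_i).
\]
This holds because rows newly becoming exceptional contribute exactly $Y=F(\pm\Theta)$, consistent with the untruncated formula. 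The algorithm's choice \eqref{eq:algorithm-potential-choice} makes the linear term equal to $-|\sum_{R_t}\nabla F(k_i)|$. Hence
\[
 \EE[\Phi_t-\Phi_{t-1}]\le \frac dN\sum_{i\in U}(\Delta F(k_i))_+ \;-\;\EE\Big|\sum_{i\in R_t}\nabla F(k_i)\Big|.
\]
The plan is to show that the positive curvature sum is at most $C/a^2$ plus $\tfrac12\EE|\sum\nabla F|$. This gives the claim with $c=1/2$.

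To do this, split $U$ into small rows $|k_i|\le\rho/2$ and large rows. By \eqref{eq:second-difference-small}, the small rows contribute at most $(d/N)\cdot n\cdot C/(a^2d)\le 2C/a^2$. For large rows, set $c_i:=\nabla F(k_i)$ for large $i$ and $c_i:=\nabla F(k_i)$ for small $i$ as well. We then apply \cref{lem:linear} with $p=d/N$ and $r=\lfloor 1/p\rfloor$, which yields
\[
 \EE\Big|\sum_{R_t}c_i\Big|\ge c\Big(p\sum_{j\le r}c_j^*+\big(p\sum_{j>r}(c_j^*)^2\big)^{1/2}\Big).
\]
We bound the curvature of large rows by splitting them according to the same rank threshold: the top $r$ values of $|\nabla F|$, and the rest. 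For a large row $i$ among the top $r$, \eqref{eq:second-difference-large-1} gives $(\Delta F)_+\le (C/a)|c_i|$. The total is therefore at most $(C/a)\,p\sum_{j\le r}c_j^*$, which is absorbed into the linear term once $a$ is large.

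For the remaining large rows we use \eqref{eq:second-difference-large-2} and Cauchy--Schwarz:
\[
 p\sum_{j>r}(\Delta F)_+\le \frac{C}{a\sqrt d}\,p\sum_{j>r}|c_j|\sqrt{F(k_j)}\le \frac{C}{a\sqrt d}\Big(p\sum_{j>r}c_j^2\Big)^{1/2}\Big(p\sum_{i\in U}F(k_i)\Big)^{1/2}.
\]
On $\{\Phi_{t-1}\le Kn\}$ we have $\sum_{i\in U}F(k_i)\le Kn$, so the last factor is at most $\sqrt{2Kd}$. The whole term is then $\le (C\sqrt K/a)\,T$, where $T$ is the square-root term of \cref{lem:linear}. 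Choosing $a\ge C'\sqrt K$ with $C'$ large makes both large-row contributions at most $\tfrac12\EE|\sum_{R_t}\nabla F(k_i)|$.

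Note that the ranking in \cref{lem:linear} is over all of $U$, including small rows, so some top-$r$ indices may be small rows. That causes no trouble, since their curvature was already charged to $C/a^2$: we simply drop them from the large-row sums, which only helps. The ``in particular'' assertion follows by discarding the nonpositive term.

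The main obstacle is the matching of the two scales. The top-$r$ rows, which are sampled about once, must be paired with the linear bound \eqref{eq:second-difference-large-1}. The tail must be paired with the $\sqrt{F/(a^2d)}$ bound \eqref{eq:second-difference-large-2}, with the potential budget $Kn$ entering only through Cauchy--Schwarz. Getting the constants so that $C$ and $c$ are independent of $a$ and $K$ requires care in this bookkeeping. A further check is that restricting to $U$ rather than $[n]$ only changes $p$ by a factor of at most $2$ and keeps $d\le N/2$.
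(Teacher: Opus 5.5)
Your proposal is correct and follows the paper's proof essentially step for step. Like the paper, you derive the exact identity from potential minimization and charge the small rows to $C/a^2$ via \eqref{eq:second-difference-small}. You then split the large rows at the rank $r=\lfloor 1/p\rfloor$, using \eqref{eq:second-difference-large-1} for the top ones and \eqref{eq:second-difference-large-2} with Cauchy--Schwarz against the budget $\sum_{i\in U}F(k_i)\le Kn$ for the tail, and finish with \cref{lem:linear} and $a\ge C\sqrt K$. The only implicit ingredient is $K\ge1$, needed to absorb the $C/a$ term into $C\sqrt K/a$, which the paper also assumes.
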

\begin{proof}
All assertions below are made on the event $\{\Phi_{t-1}\le Kn\}$.
Condition on $\calF_{t-1}$, and let
\[
 E:=E_{t-1},\qquad U:=[n]\setminus E,\qquad N:=|U|.
\]
By \eqref{eq:E-by-Phi},
\[
 |E|\le\frac{\Phi_{t-1}}Y\le\frac{Kn}{Y},
\]
{
so $N\ge n/2$ and $d\le N/2$ for all sufficiently large $n$.
Conditional on $R_t\cap E=\emptyset$, which has positive probability,
$R_t$ is a uniform $d$--subset of $U$.
}

For $i\in U$, put
\[
    s_i:=S_i(t-1),\qquad v_i:=\nabla F(s_i),
    \qquad p:=d/N.
\]
By \eqref{eq:finite-difference-identity}, the minimizing rule
\eqref{eq:algorithm-potential-choice} gives the exact identity
\begin{equation}\label{eq:no-exceptional-exact}
    \Phi_t-\Phi_{t-1}
    =\sum_{i\in R_t}\Delta F(s_i)
      -\left|\sum_{i\in R_t}v_i\right|.
\end{equation}

Reorder the rows so that $|v_1|\ge\cdots\ge|v_N|$ and set
$r:=\lfloor1/p\rfloor$. By \eqref{eq:second-difference-small},
the rows with $|s_i|\le\rho/2$ contribute at most
$C/a^2$ to $p\sum_i(\Delta F(s_i))_+$.  For the remaining rows among the
first $r$, \eqref{eq:second-difference-large-1} gives a contribution at most
$(C/a)p\sum_{j\le r}|v_j|$.  For all remaining rows,
\eqref{eq:second-difference-large-2} and Cauchy's inequality give
\begin{align*}
 p\sum_{\substack{j>r\\ |s_j|>\rho/2}}
   (\Delta F(s_j))_+
 \quad\le C
  \left(p\sum_{j>r}v_j^2\right)^{1/2}
  \left(\frac{p}{a^2d}\sum_{i\in U}F(s_i)\right)^{1/2}
 \le\frac{C'\sqrt K}{a}
  \left(p\sum_{j>r}v_j^2\right)^{1/2}.
\end{align*}
Here we used $p=d/N$, $N\ge n/2$, and
$\sum_{i\in U}F(s_i)\le Kn$.  Since $K\ge1$, we have proved
\begin{equation}\label{eq:second-difference-bound}
 p\sum_{i\in U}(\Delta F(s_i))_+
 \le\frac{C}{a^2}+\frac{C\sqrt K}{a}
 \left(
   p\sum_{j\le r}|v_j|
   +\left(p\sum_{j>r}v_j^2\right)^{1/2}
 \right).
\end{equation}

The sampling estimate applies because, under the present conditioning,
$R_t$ is a uniform $d$--subset of $U$.  It gives
\begin{equation}\label{eq:linear-applied}
 \EE\left[\left|\sum_{i\in R_t}v_i\right|
   \mathrel{\Big|}\calF_{t-1},R_t\cap E=\emptyset\right]
 \ge c\left(
   p\sum_{j\le r}|v_j|
   +\left(p\sum_{j>r}v_j^2\right)^{1/2}
 \right).
\end{equation}
Taking expectations in \eqref{eq:no-exceptional-exact} and using
\eqref{eq:second-difference-bound} and \eqref{eq:linear-applied}, we obtain
the result under the assumption that
$a\ge C''\sqrt K$ for a sufficiently large absolute constant $C''>0$.
\end{proof}

{
When a column contains an exceptional row, the greedy rule overrides
potential minimization. The following estimate bounds the resulting cost.
}

\begin{lemma}[Columns containing an exceptional row]\label{lem:exceptional-column}
There is a universal constant $C>0$ with the following property.
Let $n$ be sufficiently large,
then almost surely on the event
\[
 \{\Phi_{t-1}\le Kn,\ E_{t-1}\ne\emptyset\},
\]
we have
\begin{equation}\label{eq:exceptional-column-cost}
 \EE[\Phi_t-\Phi_{t-1}\mid
      \calF_{t-1},R_t\cap E_{t-1}\ne\emptyset]
 \le \frac{CKd}{a}.
\end{equation}
\end{lemma}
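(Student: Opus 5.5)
We bound the increment of $\Phi$ crudely, row by row, ignoring the sign rule entirely. By \eqref{eq:Phi-increment}, only rows in $R_t\setminus E_{t-1}$ contribute, and each changes its discrepancy by $\sigma_t\in\{\pm1\}$. By \eqref{eq:finite-difference-identity}, the contribution of row $i$ is $\Delta F(s_i)+\sigma_t\nabla F(s_i)\le |\Delta F(s_i)|+|\nabla F(s_i)|$. Using \eqref{eq:second-by-first} and \eqref{eq:first-difference-bound}, this is at most $2|\nabla F(s_i)|+\frac{C}{a^2d}\le \frac{C}{a}F(s_i)+\frac{C}{a^2 d}$, uniformly in the sign. (If the row newly becomes exceptional, its new potential is $Y=F(S_i(t))$ under the truncation, so the identity still applies.) Thus, pointwise,
\[
 \Phi_t-\Phi_{t-1}\le \sum_{i\in R_t\setminus E_{t-1}}\Big(\frac{C}{a}F(s_i)+\frac{C}{a^2d}\Big)\le \frac{C}{a^2}+\frac{C}{a}\sum_{i\in R_t\setminus E_{t-1}}F(s_i).
\]

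It remains to bound the conditional expectation of $\sum_{i\in R_t\setminus E}F(s_i)$ given $\calF_{t-1}$ and $R_t\cap E\ne\emptyset$, where $E=E_{t-1}\ne\emptyset$ is $\calF_{t-1}$-measurable. We do this by symmetry. Conditionally on $R_t\cap E\ne\emptyset$, each fixed $i\notin E$ belongs to $R_t$ with probability at most of order $d/n$.

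Concretely, write $\PP(i\in R_t\mid R_t\cap E\ne\emptyset)=\PP(i\in R_t,\,R_t\cap E\neq\emptyset)/\PP(R_t\cap E\ne\emptyset)$. Condition on $i\in R_t$: the remaining $d-1$ elements form a uniform $(d-1)$-subset of $[n]\setminus\{i\}$, so the numerator equals $\frac dn\,\PP(R'\cap E\ne\emptyset)$ with $|R'|=d-1$. Monotonicity in the size of the random set then gives $\PP(R'\cap E\ne\emptyset)\le\PP(R_t\cap E\ne\emptyset)$, since a uniform $d$-set stochastically contains a uniform $(d-1)$-set. Hence the conditional inclusion probability is at most $d/n$.

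Summing over rows,
\[
 \EE\Big[\sum_{i\in R_t\setminus E}F(s_i)\,\Big|\,\calF_{t-1},R_t\cap E\ne\emptyset\Big]\le \frac dn\sum_{i\notin E}F(s_i)\le \frac dn\Phi_{t-1}\le Kd.
\]
This yields a bound of $\frac{C}{a^2}+\frac{CKd}{a}\le \frac{C'Kd}{a}$, using $K,d\ge1$ and $a\ge1$.

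The only delicate point is the conditioning on $R_t\cap E\ne\emptyset$, which biases the column toward $E$. The coupling/monotonicity argument above handles it, and it requires no largeness of $n$ beyond $d\le n$. Everything else is immediate from \cref{lem:potential}.
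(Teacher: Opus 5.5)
Your proof is correct and is essentially the paper's argument: a sign-independent per-row bound $|\Delta F(k)|+|\nabla F(k)|\le \frac{C}{a}F(k)$ (the paper absorbs your extra $\frac{C}{a^2d}$ term using $F(0)=1$), an $O(d/n)$ bound on the conditional probability that a non-exceptional row lies in $R_t$, and summation against $\sum_{i\notin E}F(s_i)\le\Phi_{t-1}\le Kn$.

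The only difference is in the inclusion probability. The paper uses exchangeability of $R_t\cap U$ over $U$ to get $d/N\le 2d/n$, which needs $N\ge n/2$ and hence $\Phi_{t-1}\le Kn$ and $n$ large. Your ratio argument gives $d/n$ with no such requirement. The comparison step is best justified by the exact identity $\PP(R_t\cap E=\emptyset)=\frac{n-|E|}{n}\,\PP(R'\cap E=\emptyset)$, since $R'$ is a uniform $(d-1)$-subset of $[n]\setminus\{i\}$ rather than of $[n]$.
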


\begin{proof}
{
Retain the notation $E,U,N$ from the preceding proof. Conditional on
$R_t\cap E\ne\emptyset$, the set $R_t\cap U$ is exchangeable over $U$
and has size less than $d$. Hence, for every $i\in U$,
\[
 \PP\{i\in R_t\mid\calF_{t-1},R_t\cap E\ne\emptyset\}
 \le\frac dN\le\frac{2d}{n}.
\]
By \eqref{eq:first-difference-bound} and \eqref{eq:second-by-first},
including the bound at $k=0$ and the identity $F(0)=1$,
\begin{equation}\label{eq:one-row-increment-bound}
 |\Delta F(k)|+|\nabla F(k)|\le\frac{C}{a}F(k)
 \qquad(k\in\ZZ).
\end{equation}
Consequently, \eqref{eq:Phi-increment} and
\eqref{eq:finite-difference-identity} give
\[
 \EE[\Phi_t-\Phi_{t-1}\mid\calF_{t-1},R_t\cap E\ne\emptyset]
 \le\frac{Cd}{an}\sum_{i\in U}F(S_i(t-1))
 \le\frac{CKd}{a}.
\]
}
\end{proof}

The next proposition bounds the fluctuations of the potential.  Together
with the two drift estimates, it will allow us to control its maximum by
Freedman's inequality.

\begin{proposition}[Potential increments]\label{prop:potential-increments}
For every fixed $K\ge1$, there is a constant $C_K>0$ such that, for all
sufficiently large $n$, almost surely on $\{\Phi_{t-1}\le Kn\}$,
\begin{equation}\label{eq:Phi-variance-increment}
 \EE[(\Phi_t-\Phi_{t-1})^2\mid\calF_{t-1}]\le C_KdY,
 \qquad
 |\Phi_t-\Phi_{t-1}|\le C_KdY.
\end{equation}
\end{proposition}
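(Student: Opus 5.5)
We start from the exact increment formula \eqref{eq:Phi-increment} together with \eqref{eq:finite-difference-identity}. Every $i\notin E_{t-1}$ with $i\notin R_t$ has an unchanged potential, so
\[
 \Phi_t-\Phi_{t-1}=\sum_{i\in R_t\setminus E_{t-1}}\big(\Delta F(S_i(t-1))+\sigma_t\nabla F(S_i(t-1))\big).
\]
This holds whichever branch of \cref{alg:main} is used. By \eqref{eq:one-row-increment-bound}, which combines \eqref{eq:first-difference-bound} and \eqref{eq:second-by-first}, each summand has absolute value at most $(C/a)F(S_i(t-1))$. Since $F\le Y$ pointwise, each summand is therefore at most $CY/a\le CY$. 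There are at most $d$ summands, which gives the deterministic bound $|\Phi_t-\Phi_{t-1}|\le CdY$ with no assumption on $\Phi_{t-1}$. This is the second half of \eqref{eq:Phi-variance-increment}.

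For the conditional second moment, we use the same pointwise bound and then Cauchy--Schwarz over the at most $d$ rows in $R_t$:
\[
 (\Phi_t-\Phi_{t-1})^2\le \frac{C}{a^2}\Big(\sum_{i\in R_t\setminus E_{t-1}}F(S_i(t-1))\Big)^2\le C\,d\sum_{i\in R_t\setminus E_{t-1}}F(S_i(t-1))^2 .
\]
We then take conditional expectation given $\calF_{t-1}$. The support $R_t$ is a uniform $d$-subset of $[n]$ independent of $\calF_{t-1}$, and the signs do not enter this bound, so
\[
 \EE\Big[\sum_{i\in R_t\setminus E_{t-1}}F(S_i(t-1))^2\,\Big|\,\calF_{t-1}\Big]=\frac dn\sum_{i\notin E_{t-1}}F(S_i(t-1))^2\le \frac dn\,Y\sum_{i\notin E_{t-1}}F(S_i(t-1))\le \frac dn\,Y\,\Phi_{t-1}.
\]
Here the middle inequality uses $F\le Y$. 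On $\{\Phi_{t-1}\le Kn\}$, the second moment is therefore at most $CKd^2Y$.

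This falls short of the claimed $C_KdY$ by a factor of $d$, and closing that gap is the main obstacle. To remove the extra $d$, we would avoid the crude Cauchy--Schwarz and instead expand the square:
\[
 \Big(\sum_{i\in R_t}x_i\Big)^2=\sum_{i\in R_t} x_i^2+\sum_{i\ne j\in R_t}x_ix_j,
 \qquad |x_i|\le (C/a)F(S_i(t-1)).
\]
The diagonal terms give $\frac dn Y\Phi_{t-1}\le KdY$, as above. For the off-diagonal terms, pairs are sampled with probability at most $(d/n)^2$, so they contribute at most
\[
 C\Big(\frac dn\sum_i F(S_i(t-1))\Big)^2\le CK^2d^2 .
\]
Since $Y=d^3(\log\log n)^6\ge d$, this is at most $CK^2dY$. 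Combining the two pieces yields $\EE[(\Phi_t-\Phi_{t-1})^2\mid\calF_{t-1}]\le C_KdY$ with $C_K=C(K+K^2)$. This proves the first half of \eqref{eq:Phi-variance-increment}.
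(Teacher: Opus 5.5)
Your proof is correct and follows the paper's argument. You bound each row's increment by the $\calF_{t-1}$-measurable quantity $(C/a)F(S_i(t-1))\le CY$, which also gives the deterministic bound. You then split the second moment into a diagonal part at most $\frac dn Y\Phi_{t-1}$ and an off-diagonal part at most $C(\frac dn\Phi_{t-1})^2$ via the $(d/n)^2$ pair-sampling bound, and absorb the latter using $Y\ge d$; the initial Cauchy--Schwarz attempt is simply an unnecessary detour.
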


\begin{proof}
Condition on $\calF_{t-1}$ and assume $\Phi_{t-1}\le Kn$.  Set
\[
 g_i:=\big(|\Delta F(S_i(t-1))|+|\nabla F(S_i(t-1))|\big)
       \ind_{\{i\notin E_{t-1}\}}.
\]
By \eqref{eq:Phi-increment} and \eqref{eq:finite-difference-identity},
\[
 |\Phi_t-\Phi_{t-1}|\le\sum_{i\in R_t}g_i.
\]
{
By \eqref{eq:one-row-increment-bound},
$g_i\le CF(S_i(t-1))$ for $i\notin E_{t-1}$. Since $F\le Y$,
}
\[
 \sum_i g_i\le CK n,
 \qquad
 \sum_i g_i^2\le CY\sum_{i\notin E_{t-1}}F(S_i(t-1))
                  \le CK nY.
\]
For a uniform $d$--subset $R$ of $[n]$, two distinct coordinates are both
sampled with probability at most $(d/n)^2$.  Thus
\begin{equation}\label{eq:sampling-second-moment}
 \EE\left(\sum_{i\in R}g_i\right)^2
 \le\frac dn\sum_i g_i^2
      +\left(\frac dn\sum_i g_i\right)^2
 \le CKdY+C^2K^2d^2
 \le C_KdY,
\end{equation}
where the last inequality uses $Y\ge d$.  This proves the conditional
second-moment bound.  Finally, $g_i\le CY$ and $|R_t|=d$ give the
absolute increment bound.
\end{proof}

\begin{proposition}[Few rows reach $\Theta$]\label{prop:few-exceptional}
Uniformly for $2\le d\le {n^{1/5}}$,
\begin{equation}\label{eq:Phi-control}
    \PP\left\{\max_{t\le n}\Phi_t\le Kn\right\}=1-O(n^{-1}).
\end{equation}
Consequently, with probability $1-O(n^{-1})$,
\begin{equation}\label{eq:E-control}
    |E_t|\le\frac{Kn}{Y}
    \qquad\text{for every }t\le n.
\end{equation}
\end{proposition}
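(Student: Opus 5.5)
We prove \eqref{eq:Phi-control} by a stopped supermartingale argument combined with Freedman's inequality. Then \eqref{eq:E-control} follows at once from \eqref{eq:E-by-Phi}.

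\emph{Stopping time and drift.} Fix $K$ (say $K=4$) and take $a\ge C\sqrt K$, so that \cref{lem:no-exceptional} applies. Let $\tau:=\min\{t:\Phi_t>Kn\}$ and consider the stopped increments $D_t:=(\Phi_t-\Phi_{t-1})\ind_{\{t\le\tau\}}$. Since $\{t\le\tau\}=\{\Phi_{t-1}\le Kn\}$, this indicator is $\calF_{t-1}$--measurable. Split the conditional drift according to whether $R_t$ meets $E_{t-1}$:
\begin{itemize}
\item Off that event, \cref{lem:no-exceptional} gives drift at most $C/a^2$.
\item On it, \cref{lem:exceptional-column} gives conditional drift at most $CKd/a$. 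Its probability is at most $d|E_{t-1}|/n\le Kd/Y$ by \eqref{eq:E-by-Phi}.
\end{itemize}
Hence on $\{t\le\tau\}$,
\[
 \EE[D_t\mid\calF_{t-1}]\le \frac{C}{a^2}+\frac{CK^2d^2}{aY}\le\frac{C}{a^2}+\frac{CK^2}{a(\log\log n)^6}\le \frac{2C}{a^2}=:\mu
\]
for $n$ large. Define the martingale $M_t:=\sum_{s\le t}(D_s-\EE[D_s\mid\calF_{s-1}])$. Then $\Phi_{t\wedge\tau}\le n+\mu n+M_t$. Choosing $a$ large makes $\mu\le1$, so $\Phi_{t\wedge\tau}\le 2n+M_t$. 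If $\tau\le n$, then $\Phi_\tau>Kn$, so $M_\tau\ge(K-2)n\ge 2n$.

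\emph{Freedman.} By \cref{prop:potential-increments}, on $\{t\le\tau\}$ the martingale increments are bounded by $2C_KdY$ and have conditional variance at most $C_KdY$. The total predictable variance up to time $n$ is therefore at most $C_KndY$. Freedman's inequality then gives
\[
 \PP\Big\{\max_{t\le n}M_t\ge 2n\Big\}\le \exp\!\Big(-\frac{4n^2}{2(C_KndY+C_KdY\cdot 2n)}\Big)=\exp\!\big(-c_K n/(dY)\big).
\]
Since $dY=d^4(\log\log n)^6\le n^{4/5}(\log\log n)^6$, this is $\exp(-n^{1/5-o(1)})=O(n^{-1})$.

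\emph{Main obstacle.} The key step is the drift bound on exceptional columns. It requires that $|E_{t-1}|\le Kn/Y$ holds exactly when we condition on $\Phi_{t-1}\le Kn$, which is why the argument is stopped at $\tau$. It also requires that the cost $CKd/a$ times the probability $Kd/Y$ is negligible, and this is where $Y=d^3(\log\log n)^6$ matters. A second point to check is that $\Phi_0=n$, since $F(0)=1$ and $E_0=\emptyset$; this is what makes the starting level $n$ used above correct. Finally, on $\{\tau>n\}$ we have $\max_t\Phi_t\le Kn$, which proves \eqref{eq:Phi-control}.
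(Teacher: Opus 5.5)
Your proof is correct and follows the paper's argument: you stop at the first crossing of $Kn$, combine the two drift lemmas with $\PP\{R_t\cap E_{t-1}\ne\emptyset\mid\calF_{t-1}\}\le Kd/Y$ to get drift at most a constant $\le 1$, and apply Freedman with the increment bounds of \cref{prop:potential-increments}. The only differences are cosmetic. You apply Freedman to the Doob martingale part $M_t$ rather than to the paper's supermartingale $Z_t=\Phi_{t\wedge\tau}-n-\beta(t\wedge\tau)$. Also, $\{t\le\tau\}$ is really $\{\max_{s<t}\Phi_s\le Kn\}$, which is contained in (not equal to) $\{\Phi_{t-1}\le Kn\}$; this is harmless.
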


\begin{proof}
{
Choose absolute constants $C_0,C_1>0$ for the bounds in
\cref{lem:no-exceptional,lem:exceptional-column}. Fix $K\ge4$, and then
choose $a$ sufficiently large that \cref{lem:no-exceptional} applies and
$\beta:=2C_0/a^2\le1$. On $\{\Phi_{t-1}\le Kn\}$, a union bound gives
\begin{equation}\label{eq:bad-step-probability}
 \PP\{R_t\cap E_{t-1}\ne\emptyset\mid\calF_{t-1}\}
 \le\frac{d|E_{t-1}|}{n}\le\frac{Kd}{Y}.
\end{equation}
Combining the two drift estimates therefore yields
\[
 \EE[\Phi_t-\Phi_{t-1}\mid\calF_{t-1}]
 \le\frac{C_0}{a^2}+\frac{C_1K^2d^2}{aY}
 =\frac{C_0}{a^2}+\frac{C_1K^2}{ad(\log\log n)^6}
 \le\beta
\]
for all sufficiently large $n$, uniformly for $d\ge2$.

Stop at the first crossing of $Kn$, setting
\[
 \tau:=\min\big(\{t\in[n]:\Phi_t>Kn\}\cup\{n\}\big).
\]
Since $\{t\le\tau\}$ is $\calF_{t-1}$--measurable, the process
\begin{equation}\label{eq:Z-def}
 Z_t:=\Phi_{t\wedge\tau}-n-\beta(t\wedge\tau)
\end{equation}
is a supermartingale with $Z_0=0$. By
\cref{prop:potential-increments}, after enlarging $C_K$,
\begin{equation}\label{eq:Z-variance-increment}
 \EE[(Z_t-Z_{t-1})^2\mid\calF_{t-1}]\le C_KdY,
 \qquad
 |Z_t-Z_{t-1}|\le C_KdY.
\end{equation}
Freedman's inequality for supermartingales \cite{freedman} gives
\[
 \PP\{\max_{t\le n}Z_t\ge x\}
 \le\exp\left\{-\frac{x^2}{2(C_KndY+C_KdYx/3)}\right\}.
\]
Taking $x=Kn/2$, we obtain
\begin{equation}\label{eq:Freedman}
 \PP\left\{\max_{t\le n}Z_t\ge\frac{Kn}{2}\right\}
 \le\exp\left\{-c_K\frac{n}{dY}\right\}\le n^{-1},
\end{equation}
uniformly for $d\le n^{1/5}$ and all sufficiently large $n$, since
$n/(dY)\ge n^{1/5}/(\log\log n)^6$.

If $\max_{t\le n}\Phi_t>Kn$, then at the first crossing time
\[
 Z_\tau>Kn-n-\beta\tau\ge(K-2)n\ge Kn/2.
\]
Thus \eqref{eq:Freedman} proves \eqref{eq:Phi-control}, and
\eqref{eq:E-control} follows from \eqref{eq:E-by-Phi}.
}
\end{proof}

\subsection{Rows above the threshold}\label{sec:above-threshold}

For $k\ge0$ and $0\le t\le n$, define
\begin{equation}\label{eq:M-def}
    M_k(t):=\left\{i\in[n]:
       \max_{s\le t}|S_i(s)|\ge\Theta+3k\right\}.
\end{equation}
Thus $M_0(t)=E_t$.  The gap of three between successive levels is chosen 
for convenience.

\begin{lemma}[Three shared columns are necessary]\label{lem:witness}
Fix $k\ge1$ and $i\in M_k(n)$.  There are distinct times
$j_1<j_2<j_3$ such that, for each $u\in\{1,2,3\}$,
\begin{align*}
    i&\in R_{j_u},\\
    |S_i(j_u-1)|&\ge\Theta+3(k-1),\\
    |S_i(j_u)|&>|S_i(j_u-1)|,
\end{align*}
and
\begin{equation}\label{eq:witness-shared-column}
    \big(M_{k-1}(j_u-1)\setminus\{i\}\big)\cap R_{j_u}\ne\emptyset.
\end{equation}
\end{lemma}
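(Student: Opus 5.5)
The plan is to follow the trajectory of $|S_i(\cdot)|$ and extract three upward steps that all start at or above the level $\Theta+3(k-1)$. Since $i\in M_k(n)$, some time $t^*\le n$ has $|S_i(t^*)|\ge\Theta+3k$. Let $t_0$ be the last time $s<t^*$ with $|S_i(s)|\le\Theta+3(k-1)-1$; such a time exists because $S_i(0)=0<\Theta$. After $t_0$, row $i$ changes by $\pm1$ only at times $s$ with $i\in R_s$. On $(t_0,t^*]$ the absolute value stays at least $\Theta+3(k-1)$ and rises from exactly $\Theta+3(k-1)$ at time $t_0+1$ to at least $\Theta+3k$. Note that $\Theta+3(k-1)\ge\Theta\ge b+2>0$, so there is no sign flip through zero in this window, and $|S_i|$ moves by exactly $\pm1$ whenever $i\in R_s$. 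Hence $|S_i|$ performs a lattice walk from $\Theta+3k-3$ to $\Theta+3k$ that never drops below $\Theta+3k-3$. I would take $j_u$, for $u=1,2,3$, to be the last time in $(t_0+1,t^*]$ at which $|S_i|$ moves from $\Theta+3k-4+u$ to $\Theta+3k-3+u$. These are up-steps with $i\in R_{j_u}$ and $|S_i(j_u-1)|\ge\Theta+3(k-1)$. They are distinct and increasing, because after the last crossing of a level the walk stays above it, so the next level's last crossing comes later.

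It remains to verify \eqref{eq:witness-shared-column} at each $j_u$. At time $j_u-1$ we have $|S_i(j_u-1)|\ge\Theta$, so $i\in E_{j_u-1}$ and $R_{j_u}\cap E_{j_u-1}\ne\emptyset$. The algorithm is therefore in the greedy branch. It picks $i_*\in R_{j_u}\cap E_{j_u-1}$ maximizing $|S_{i_*}(j_u-1)|$ and sets $\sigma_{j_u}=-\sign(S_{i_*}(j_u-1))$.

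If $i_*=i$, then $\sigma_{j_u}$ decreases $|S_i|$, because $S_i(j_u-1)\ne0$. This contradicts $|S_i(j_u)|>|S_i(j_u-1)|$, so $i_*\ne i$. By maximality, $|S_{i_*}(j_u-1)|\ge|S_i(j_u-1)|\ge\Theta+3(k-1)$, so $i_*\in M_{k-1}(j_u-1)\setminus\{i\}$. Also $i_*\in R_{j_u}$, which gives the claim.

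The only delicate point is the bookkeeping of the three up-crossings: they must be distinct, and each must start from at least $\Theta+3(k-1)$. The "last crossing of each level after the last visit below $\Theta+3k-3$" construction handles both. It also uses the facts that $\Theta>0$ and that $\sign(0)$ never arises for row $i$.
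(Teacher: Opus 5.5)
Your proof is correct and follows the same route as the paper. You take three upward moves of $|S_i|$ from height at least $\Theta+3(k-1)$; at each, row $i$ already lies in $E_{j_u-1}$, so the greedy branch is active. The selected row $i_*$ must then differ from $i$ and satisfy $|S_{i_*}(j_u-1)|\ge|S_i(j_u-1)|$, which places it in $M_{k-1}(j_u-1)\setminus\{i\}$. Your last-crossing construction makes explicit the choice of three distinct up-steps that the paper leaves implicit. The auxiliary time $t_0$ is harmless but unnecessary: the last up-crossings of the levels $\Theta+3k-3,\Theta+3k-2,\Theta+3k-1$ before $t^*$ already do the job.
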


\begin{proof}
Before $|S_i|$ first reaches $\Theta+3k$, it makes three upward moves from
states of height at least $\Theta+3(k-1)$.  At each such move, row $i$ is
already exceptional.  If the algorithm had selected $i$, its sign would have
decreased $|S_i|$.  It therefore selected another exceptional row in the
same column, and that row had absolute discrepancy at least
$|S_i(j_u-1)|$.  This is \eqref{eq:witness-shared-column}.
\end{proof}

\begin{lemma}[Recursion for successive levels]\label{lem:level-recursion}
For every $k\ge1$ and $\mu\in[0,1]$,
\begin{equation}\label{eq:level-expectation}
 \EE\left[
  |M_k(n)|\ind_{\{|M_{k-1}(n)|\le n\mu\}}
 \right]
 \le n(Cd^2\mu)^3.
\end{equation}
Consequently, for every $\mu'>0$,
\begin{equation}\label{eq:level-Markov}
 \PP\{|M_k(n)|>n\mu',\ |M_{k-1}(n)|\le n\mu\}
 \le\frac{(Cd^2\mu)^3}{\mu'}.
\end{equation}
\end{lemma}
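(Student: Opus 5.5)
The plan is a first-moment computation: combine \cref{lem:witness} with a union bound over rows and triples of times. The one subtlety is that the event $\{|M_{k-1}(n)|\le n\mu\}$ concerns the \emph{final} time, so it is not adapted to the filtration. I will get around this by monotonicity: $t\mapsto M_{k-1}(t)$ is nondecreasing, since it is defined through a running maximum. Hence on $\{|M_{k-1}(n)|\le n\mu\}$ we also have $|M_{k-1}(j-1)|\le n\mu$ for every $j\le n$, and this weaker condition is $\calF_{j-1}$--measurable.

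Concretely, for a row $i$ and a time $j\in[n]$ define the $\calF_j$--measurable indicator
\[
 B_j(i):=\ind\Big\{i\in R_j,\ \big(M_{k-1}(j-1)\setminus\{i\}\big)\cap R_j\ne\emptyset,\ |M_{k-1}(j-1)|\le n\mu\Big\}.
\]
By \cref{lem:witness} and the monotonicity above, on $\{|M_{k-1}(n)|\le n\mu\}$ every $i\in M_k(n)$ admits $j_1<j_2<j_3$ with $B_{j_1}(i)B_{j_2}(i)B_{j_3}(i)=1$. Therefore
\[
 |M_k(n)|\ind_{\{|M_{k-1}(n)|\le n\mu\}}\le\sum_{i\in[n]}\ \sum_{j_1<j_2<j_3\le n}B_{j_1}(i)B_{j_2}(i)B_{j_3}(i).
\]

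Next, bound the conditional probability of a single witness. Condition on $\calF_{j-1}$; then $M:=M_{k-1}(j-1)$ is fixed, and $R_j$ is a uniform $d$--subset independent of $\calF_{j-1}$. If $|M|\le n\mu$, the union bound over the second row $i'\in M\setminus\{i\}$ gives
\[
 \EE[B_j(i)\mid\calF_{j-1}]\le\sum_{i'\in M\setminus\{i\}}\PP\{i,i'\in R_j\}\le n\mu\cdot\frac{d(d-1)}{n(n-1)}\le\frac{2d^2\mu}{n}.
\]
If $|M|>n\mu$, the indicator vanishes. Since $B_{j_1}(i)B_{j_2}(i)$ is $\calF_{j_3-1}$--measurable, applying the tower property successively at $j_3$, then $j_2$, then $j_1$ gives
\[
 \EE[B_{j_1}(i)B_{j_2}(i)B_{j_3}(i)]\le\Big(\frac{2d^2\mu}{n}\Big)^3 .
\]

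Summing over the at most $n^3/6$ triples and the $n$ rows yields
\[
 \EE\big[|M_k(n)|\ind_{\{|M_{k-1}(n)|\le n\mu\}}\big]\le n\cdot\frac{n^3}{6}\Big(\frac{2d^2\mu}{n}\Big)^3\le n(Cd^2\mu)^3,
\]
which is \eqref{eq:level-expectation}. Then \eqref{eq:level-Markov} follows from Markov's inequality applied to $|M_k(n)|\ind_{\{|M_{k-1}(n)|\le n\mu\}}$ at level $n\mu'$, since on the event in question this variable exceeds $n\mu'$.

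The only step requiring care is the measurability issue: the final-time conditioning is replaced by the adapted truncation $|M_{k-1}(j-1)|\le n\mu$ inside each indicator. Once that is done, the rest is routine.
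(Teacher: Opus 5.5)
Your proposal is correct and follows the paper's proof essentially step for step. The shared ingredients are the witness lemma, the adapted truncation $|M_{k-1}(j_u-1)|\le n\mu$ (justified by nestedness of $M_{k-1}(\cdot)$), the union bound over the second row, iterated conditioning at the three times, a union bound over triples and rows, and Markov's inequality; the only cosmetic difference is that you dominate $|M_k(n)|\ind_{\{|M_{k-1}(n)|\le n\mu\}}$ pointwise by a sum of indicator products, whereas the paper bounds $\PP\{i\in M_k(n),\,|M_{k-1}(n)|\le n\mu\}$ row by row and then sums.
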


\begin{proof}
Fix a row $i$ and three prescribed times $j_1<j_2<j_3$.  Conditional on
$\calF_{t-1}$, the set $M_{k-1}(t-1)$ is fixed and $R_t$ is fresh.  Whenever
$|M_{k-1}(t-1)|\le n\mu$, a union bound over the possible second row gives
\begin{align}\label{eq:one-shared-column}
 &\PP\left\{i\in R_t,\
   (R_t\setminus\{i\})\cap M_{k-1}(t-1)\ne\emptyset
   \mathrel{\Big|}\calF_{t-1}\right\}\nonumber\\
 &\qquad\le |M_{k-1}(t-1)|\frac{d(d-1)}{n(n-1)}
 \le\frac{Cd^2\mu}{n}.
\end{align}
To make the sequential conditioning explicit, intersect the witness event at
each time $j_u$ with the condition
$|M_{k-1}(j_u-1)|\le n\mu$.  This condition is known just before column
$j_u$ arrives, so iterating \eqref{eq:one-shared-column} bounds any prescribed
triple of such events by $(Cd^2\mu/n)^3$.  On the event
$|M_{k-1}(n)|\le n\mu$, nestedness makes all three added conditions automatic.
Hence Lemma~\ref{lem:witness} and a union bound over the fewer than $n^3$ choices
of witness times give
\begin{equation}\label{eq:single-row-level}
 \PP\{i\in M_k(n),\ |M_{k-1}(n)|\le n\mu\}
 \le C(d^2\mu)^3.
\end{equation}
Summing this estimate over $i\in[n]$ proves
\eqref{eq:level-expectation}, and Markov's inequality proves
\eqref{eq:level-Markov}.
\end{proof}

Fix a sufficiently large absolute constant $D$ and define {\color{black}the} sequence
\begin{equation}\label{eq:x-recursion}
 x_0:=\frac{DK}{(\log\log n)^6},
 \qquad
 x_k:=x_{k-1}^2\quad(k\ge1).
\end{equation}
We will prove that typically $|M_k(n)|\le n x_k/(Dd^3)$.  Notice that the assertion at
$k=0$ is exactly the bound $|E_n|\le Kn/Y$ from
Proposition~\ref{prop:few-exceptional}.

\begin{lemma}[Iteration of the recursion]\label{lem:iteration}
For every $r\ge0$,
\begin{equation}\label{eq:iteration-failure}
 \PP\left\{\max_{t\le n}\Phi_t\le Kn\ \text{ and }\
   |M_k(n)|>\frac{nx_k}{Dd^3}
   \text{ for some }k\le r\right\}
 \le\frac{C}{D^2}\sum_{k=0}^{r-1}x_k=O(x_0).
\end{equation}
\end{lemma}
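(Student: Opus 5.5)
The plan is a union bound over the first level at which the claimed bound fails. Write $B_k:=\{|M_k(n)|>nx_k/(Dd^3)\}$ and $G:=\{\max_{t\le n}\Phi_t\le Kn\}$. If $G$ holds and some $B_k$ with $k\le r$ holds, take the least such $k$.

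The case $k=0$ is impossible on $G$. Indeed, $M_0(n)=E_n$, and \eqref{eq:E-by-Phi} gives $|E_n|\le Kn/Y = n x_0/(Dd^3)$, since $x_0/(Dd^3)=K/(d^3(\log\log n)^6)=K/Y$. Hence the minimal failing level satisfies $k\ge1$, and the event lies inside $B_k\cap B_{k-1}^c$. This gives
\[
 \PP\{G\cap\textstyle\bigcup_{k\le r}B_k\}\le\sum_{k=1}^{r}\PP\{|M_k(n)|>n\mu',\ |M_{k-1}(n)|\le n\mu\},
\]
with $\mu:=x_{k-1}/(Dd^3)$ and $\mu':=x_k/(Dd^3)$.

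Next apply \eqref{eq:level-Markov} with these values. Before doing so, check that $\mu\in[0,1]$. This holds for large $n$ because $x_{k-1}\le x_0\le1$ once $(\log\log n)^6\ge DK$, and the sequence $x_k$ is then decreasing. The resulting bound is
\[
 \frac{(Cd^2\mu)^3}{\mu'}=\frac{C^3 x_{k-1}^3/(D^3d^3)}{x_{k-1}^2/(Dd^3)}=\frac{C^3}{D^2}x_{k-1}.
\]
Here the factor $d^{-3}$ built into the normalization cancels exactly against the $d^6$ coming from $(d^2)^3$. This cancellation is the point of choosing $Y=d^3(\log\log n)^6$. Summing over $k=1,\dots,r$ gives $\frac{C}{D^2}\sum_{k=0}^{r-1}x_k$, after renaming $C^3$ as $C$.

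It remains to show the sum is $O(x_0)$. For large $n$ we have $x_0\le1/2$. Then $x_k=x_0^{2^k}\le x_0\cdot 2^{-(2^k-1)}$, so $\sum_k x_k\le 2x_0$. The only point needing care is that \cref{lem:level-recursion} is stated unconditionally, so no extra conditioning on $G$ is required there; $G$ is used only to rule out $k=0$. The remaining work is to verify the constant bookkeeping and that $\mu\le1$.
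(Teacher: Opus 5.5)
Your proposal is correct and follows the paper's proof: level $0$ holds on $\{\max_t\Phi_t\le Kn\}$ by \eqref{eq:E-by-Phi}, since $x_0/(Dd^3)=K/Y$. You then take a union bound over the first failing level $k\ge1$, apply \eqref{eq:level-Markov} with $\mu=x_{k-1}/(Dd^3)$ and $\mu'=x_k/(Dd^3)$, and sum using $x_k=x_0^{2^k}$ with $x_0\le1/2$; you also make explicit two checks the paper leaves implicit, namely the identity $x_0/(Dd^3)=K/Y$ and the condition $\mu\le1$.
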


\begin{proof}
On $\{\max_{t\le n}\Phi_t\le Kn\}$, the assertion at level zero follows
from \eqref{eq:E-by-Phi}.  If the estimates first fail at a level $k\ge1$,
apply \eqref{eq:level-Markov} with
\[
    \mu=\frac{x_{k-1}}{Dd^3},
    \qquad
    \mu'=\frac{x_k}{Dd^3}.
\]
Since $x_k=x_{k-1}^2$, this gives
\begin{align*}
\PP\left\{|M_k(n)|>\frac{nx_k}{Dd^3},\
              |M_{k-1}(n)|\le\frac{nx_{k-1}}{Dd^3}\right\}
\le
   \frac{\left(Cd^2x_{k-1}/(Dd^3)\right)^3}
        {x_k/(Dd^3)}
   =\frac{C}{D^2}x_{k-1}.
\end{align*}
Sum over the possible first failed levels.  Since
$x_k=x_0^{2^k}$ and $x_0\le1/2$ for large $n$, one has
$\sum_{k\ge0}x_k\le2x_0$.
\end{proof}

\begin{proof}[Proof of Proposition~\ref{prop:sparse-upper}]
Put $L:=\log\log n$.  By Proposition~\ref{prop:few-exceptional},
\[
 \PP\left\{\max_{t\le n}\Phi_t>Kn\right\}
 =O(n^{-1})
\]
uniformly for $d\le {n^{1/5}}$.  Also $x_0=DK/L^6=O(L^{-6})$.  Let
\[
    r_*:=\min\left\{k\ge0:\frac{nx_k}{Dd^3}<1\right\}.
\]
Since $x_k=x_0^{2^k}$, one has $r_*=O(\log\log n)$. By
Lemma~\ref{lem:iteration}, outside an event of probability $O(L^{-6})$,
$|M_k(n)|\le nx_k/(Dd^3)$ for every $k\le r_*$.  Hence
$M_{r_*}(n)=\emptyset$.
It follows that
\[
    \max_{t\le n}\max_{i\le n}|S_i(t)|<\Theta+3r_*.
\]
The second assertion in \eqref{eq:lambda-theta-bounds} completes the proof.
\end{proof}

\section{Proof of the main theorem}

For the remaining sparsities use the following

\begin{theorem}[Online Beck--Fiala \cite{altschuler-tikhomirov-bf}]
\label{thm:online-bf}
For every fixed $\eta>0$, there are constants $c_\eta,C_\eta>0$ and an
efficient randomized online algorithm such that, for every fixed sequence of
$T$ vectors $a_1,\ldots,a_T\in[-1,1]^n$ having at most $d$ nonzero coordinates
each,
\[
 \PP\left\{
   \max_{t\le T}\left\|\sum_{s\le t}\sigma_sa_s\right\|_\infty
   >C_\eta\sqrt d
 \right\}
 \le C_\eta T\exp\left\{-\frac{c_\eta d}{\log^{2+\eta}(ed)}\right\}.
\]
\end{theorem}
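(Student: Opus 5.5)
The plan is to build a randomized online walk in the spirit of the self-balancing walk of Alweiss--Liu--Sawhney, modified so that its guarantees depend only on the column sparsity $d$. The design constraint is visible in the failure bound: it is a union bound over the $T$ \emph{arrivals}, not over the $n$ rows. So the argument must never require each of the $n$ rows to stay small with probability $1-n^{-\omega(1)}$. Instead, all randomness and all bad events should be localized to the at most $d$ coordinates touched at each time step.

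\textbf{The walk.} For an incoming vector $a_t$ with support $R_t$, $|R_t|\le d$, I would choose
\[
\sigma_t=-1\quad\text{with probability}\quad \frac12\Big(1+\frac{\langle a_t,\nabla\Psi(S(t-1))\rangle}{\beta}\Big),
\]
clipped to $[0,1]$. Here $\Psi(S)=\sum_i\psi(S_i)$ is a sum of one-row potentials, as in the proof of Proposition~\ref{prop:sparse-upper}. The profile $\psi$ is roughly quadratic up to $c\sqrt d$ and then exponential with rate $\lambda\approx 1/(\sqrt d\,\log^{1+\eta/2}(ed))$. The normalization $\beta$ is chosen so that a step is clipped only when some row in $R_t$ is already at height $\gtrsim\sqrt d$.

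\textbf{Key steps, in order.}
\begin{enumerate}
\item Show that, on unclipped steps, the expected change of $\sum_{i\in R_t}\psi(S_i)$ is nonpositive once the row potentials are large. The linear term equals $-\langle a_t,\nabla\Psi\rangle^2/\beta$, and it dominates the curvature cost $\sum_{i\in R_t}\psi''(S_i)a_{t,i}^2\le d\,\lambda^2\max\psi$. This is the analogue of Lemma~\ref{lem:no-exceptional}, but here it holds for a \emph{fixed} input, because the randomness now comes from the sign and not from the support.
\item Use this to show that each row process $S_i$, sampled at the times when row $i$ is touched, has stationary exponential tails at scale $\sqrt d\cdot\mathrm{polylog}(d)$. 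This follows by comparison with a one-dimensional biased walk whose drift toward zero appears beyond height $\sqrt d$.
\item Upgrade from the soft scale $\sqrt d\,\mathrm{polylog}(d)$ to the hard bound $C_\eta\sqrt d$. For this I would adapt the Bansal--Jiang idea: rows above $C_\eta\sqrt d/2$ are given priority, in the same way the exceptional rows are treated greedily in Algorithm~\ref{alg:main}. A row then exceeds $C_\eta\sqrt d$ at time $t$ only if, within its last stretch of moves, the $\le d$ rows co-occurring with it conspired against it.
\item Bound the probability of this conspiracy at a single time $t$ by $\exp(-c_\eta d/\log^{2+\eta}(ed))$, using Freedman's inequality on the martingale formed by the signed contributions of the at most $d$ coordinates of $a_t$. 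Finally, take a union bound over $t\le T$.
\end{enumerate}

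\textbf{Main obstacle.} I expect step~3 to be the crux. The difficulty is that priority rows can be numerous and can share columns, so greedy corrections interfere with one another. The analogous tool in this paper is the witness argument of Lemma~\ref{lem:witness}, but it relies on randomness of the supports, which is unavailable for a worst-case input. The first thing I would try is a charging scheme indexed by the columns rather than by the rows. Each violation would be charged to a window of $\asymp d/\log^{2+\eta}d$ consecutive steps in which a fixed active row received biased signs despite the walk's own randomization, and the exponent in the failure bound would then come from the sign randomness within that window. The loss $\log^{2+\eta}(ed)$ would arise from dyadically pigeonholing both the heights and the window lengths.
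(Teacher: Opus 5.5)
This theorem is not proved in the paper. It is quoted from \cite{altschuler-tikhomirov-bf} and invoked only for $d>n^{1/5}$, so there is no in-paper argument to compare with. Judged on its own, your proposal has genuine gaps, starting with Steps~1--2.

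The claim of Step~1 fails for a fixed input. The expected first-order decrease of your walk is $\langle a_t,\nabla\Psi(S(t-1))\rangle^2/\beta$, and nothing bounds it from below. Take $R_t=\{i,j\}$, $a_{t,i}=a_{t,j}=1$ and $S_i(t-1)=-S_j(t-1)=h$. Then $\langle a_t,\nabla\Psi\rangle=\psi'(h)+\psi'(-h)=0$ and the coin is fair. Whatever the sign, exactly one of the two rows moves outward, and $\psi(S_i)+\psi(S_j)$ increases deterministically by $\psi(h+1)+\psi(h-1)-2\psi(h)>0$, however large $\psi(h)$ is. In this paper the lower bound on the linear term is \cref{lem:linear}, which comes from averaging over the uniformly random support. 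A worst-case input removes exactly that averaging, and randomizing the sign does not replace it.

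The same mechanism defeats the one-dimensional comparison in Step~2. The drift of $S_i$ is $-a_{t,i}\langle a_t,\nabla\Psi\rangle/\beta$. So a row at height $h$ that shares a column (with unit entries) with a row at $-h'$, $h'>h$, is pushed outward: a high row has no drift toward zero of its own. The self-balancing walk \cite{self-balancing} sidesteps local drift by controlling $S(t)$ in every direction at once, but for $d$-sparse inputs that yields only $O(\sqrt d\log(nT))$.

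Even granting Steps~1--2, they say nothing at the scale of the theorem. Tails at scale $\sqrt d\,\mathrm{polylog}(d)$ bound $\PP\{|S_i(t)|>C_\eta\sqrt d\}$ only by $\exp(-C_\eta/\mathrm{polylog}(d))$. They also do not make the priority rows of Step~3 rare. In this paper the greedy override is harmless only because \cref{prop:few-exceptional} first makes exceptional rows a $K/Y$ fraction, after which \cref{lem:level-recursion} exploits fresh uniform supports; neither is available for a fixed input. So all of the content sits in Steps~3--4, which are a plan rather than an argument.

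Moreover, the source of the exponent you propose cannot work. For a binary input, a climb from $C_\eta\sqrt d/2$ to $C_\eta\sqrt d$ needs only about $C_\eta\sqrt d/2$ unfavourable signs on row $i$. As long as these have conditional probability bounded below, the climb occurs with probability at least $e^{-O(\sqrt d)}$. Hence the bound $\exp(-c_\eta d/\log^{2+\eta}(ed))$ cannot come, as you suggest, from sign randomness along a window of row $i$'s own history. It must come from the joint behaviour of the up to $d$ rows sharing columns with row $i$: your ``conspiracy'' must be exponentially unlikely, which is a concentration statement across the coordinates of a column.

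Freedman's inequality has nothing to act on there. The heights $S_j(t-1)$, $j\in R_t$, are not martingale increments in $j$, and for a worst-case input they are coupled through an arbitrary shared history. What is missing is a mechanism, for instance some decoupling of the rows sharing a column, that makes such conspiracies exponentially unlikely in $d/\mathrm{polylog}(d)$ per arrival, uniformly in $n$. The proposal names the target bound but does not supply such a mechanism.
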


\begin{proof}[Proof of \cref{thm:main}]
{
The lower bound, including its extension to every prescribed $T\ge n$,
is \cref{prop:lower-horizons}.
}

For the upper bound, use Algorithm~\ref{alg:main} when $d\le {n^{1/5}}$.
Proposition~\ref{prop:sparse-upper} gives the desired estimate uniformly in this
range.  When $d>{n^{1/5}}$, apply \cref{thm:online-bf}.
\end{proof}


\begin{thebibliography}{99}
\bibitem{ALS2}
E.~Abbe, S.~Li, and A.~Sly.
\newblock Binary perceptron: efficient algorithms can find solutions in a rare
well-connected cluster.
\newblock In \emph{Proceedings of the 54th Annual ACM Symposium on Theory of
Computing}, pages 860--873, 2022.

\bibitem{ajtai-carpool}
M.~Ajtai, J.~Aspnes, M.~Naor, Y.~Rabani, L.~J.~Schulman, and O.~Waarts.
\newblock Fairness in scheduling.
\newblock \emph{Journal of Algorithms}, 29(2):306--357, 1998.


\bibitem{AT-old}
D.~J. Altschuler and K.~Tikhomirov.
\newblock A threshold for online balancing of sparse iid vectors.
\newblock \emph{arXiv preprint arXiv:2509.02432}, 2025.


\bibitem{altschuler-tikhomirov-bf}
D.~J. Altschuler and K.~Tikhomirov.
\newblock Online Beck--Fiala down to logarithmic sparsity.
\newblock \emph{arXiv preprint arXiv:2607.14238}, 2026.

\bibitem{self-balancing}
R.~Alweiss, Y.~P. Liu, and M.~Sawhney.
\newblock Discrepancy minimization via a self-balancing walk.
\newblock In \emph{Proceedings of the 53rd Annual ACM Symposium on Theory of
Computing}, pages 14--20, 2021.

\bibitem{bansal-online}
N.~Bansal, H.~Jiang, S.~Singla, and M.~Sinha.
\newblock Online vector balancing and geometric discrepancy.
\newblock In \emph{Proceedings of the 52nd Annual ACM Symposium on Theory of
Computing}, pages 1139--1152, 2020.

\bibitem{bansal-meka}
N.~Bansal and R.~Meka.
\newblock On the discrepancy of random low degree set systems.
\newblock \emph{Random Structures \& Algorithms}, 57(3):695--705, 2020.

\bibitem{bansal-spencer}
N.~Bansal and J.~H. Spencer.
\newblock On-line balancing of random inputs.
\newblock \emph{Random Structures \& Algorithms}, 57(4):879--891, 2020.

\bibitem{bansal-survey}
N.~Bansal.
\newblock Discrepancy theory and related algorithms.
\newblock In \emph{International Congress of Mathematicians, Vol.~7},
pages 5178--5210. EMS Press, 2023.

\bibitem{beck-fiala}
J.~Beck and T.~Fiala.
\newblock ``Integer-making'' theorems.
\newblock \emph{Discrete Applied Mathematics}, 3(1):1--8, 1981.

\bibitem{freedman}
D.~A. Freedman.
\newblock On tail probabilities for martingales.
\newblock \emph{Annals of Probability}, 3(1):100--118, 1975.

\bibitem{gupta-carpool}
A.~Gupta, R.~Krishnaswamy, A.~Kumar, and S.~Singla.
\newblock Online carpooling using expander decompositions.
\newblock In \emph{40th IARCS Annual Conference on Foundations of Software
Technology and Theoretical Computer Science}, 2020.

\bibitem{experimental-design}
C.~Harshaw, F.~S\"avje, D.~A. Spielman, and P.~Zhang.
\newblock Balancing covariates in randomized experiments with the
Gram--Schmidt walk design.
\newblock \emph{Journal of the American Statistical Association},
119(548):2934--2946, 2024.

\bibitem{kim-roche}
J.~H. Kim and J.~R. Roche.
\newblock Covering cubes by random half cubes, with applications to binary
neural networks.
\newblock \emph{Journal of Computer and System Sciences},
56(2):223--252, 1998.

\bibitem{optimal-online}
J.~Kulkarni, V.~Reis, and T.~Rothvoss.
\newblock Optimal online discrepancy minimization.
\newblock In \emph{Proceedings of the 56th Annual ACM Symposium on Theory of
Computing}, pages 1832--1840, 2024.

\bibitem{potukuchi-spectral}
A.~Potukuchi.
\newblock A spectral bound on hypergraph discrepancy.
\newblock In \emph{47th International Colloquium on Automata, Languages, and
Programming}, volume 168 of LIPIcs, Article 93, 2020.

\bibitem{random-hypergraph-discrepancy}
C.~MacRury, T.~Masa\v{r}\'ik, L.~Pai, and X.~P\'erez-Gim\'enez.
\newblock The phase transition of discrepancy in random hypergraphs.
\newblock \emph{SIAM Journal on Discrete Mathematics}, 37(3):1818--1841, 2023.

\bibitem{spencer-online}
J.~Spencer.
\newblock Balancing games.
\newblock \emph{Journal of Combinatorial Theory, Series B}, 23(1):68--74,
1977.

\bibitem{ALS1}
E.~Abbe, S.~Li, and A.~Sly.
\newblock Proof of the contiguity conjecture and lognormal limit for the symmetric perceptron.
\newblock In \emph{2021 IEEE 62nd Annual Symposium on Foundations of Computer Science}, pages 327--338, 2022.

\bibitem{aden-ali-online}
I.~Aden-Ali.
\newblock Optimal online discrepancy minimization in linear time.
\newblock \emph{arXiv preprint arXiv:2607.04388}, 2026.

\bibitem{dja-crit}
D.~J. Altschuler.
\newblock Critical window of the symmetric perceptron.
\newblock \emph{Electronic Journal of Probability}, 28, Paper No.~123, 2023.

\bibitem{dja-jnw}
D.~J. Altschuler and J.~Niles-Weed.
\newblock The discrepancy of random rectangular matrices.
\newblock \emph{arXiv preprint arXiv:2101.04036}, 2021.

\bibitem{APZ}
B.~Aubin, W.~Perkins, and L.~Zdeborov\'a.
\newblock Storage capacity in symmetric binary perceptrons.
\newblock \emph{Journal of Physics A: Mathematical and Theoretical}, 52(29):294003, 2019.

\bibitem{bansal-jiang2}
N.~Bansal and H.~Jiang.
\newblock Decoupling via affine spectral-independence: Beck--Fiala and Koml\'os bounds beyond Banaszczyk.
\newblock \emph{arXiv preprint arXiv:2508.03961}, 2025.

\bibitem{bansal-jiang1}
N.~Bansal and H.~Jiang.
\newblock An improved bound for the Beck--Fiala conjecture.
\newblock \emph{arXiv preprint arXiv:2508.01937}, 2025.

\bibitem{bansal-smooth1}
N.~Bansal, H.~Jiang, R.~Meka, S.~Singla, and M.~Sinha.
\newblock Prefix discrepancy, smoothed analysis, and combinatorial vector balancing.
\newblock In \emph{13th Innovations in Theoretical Computer Science Conference}, volume 215 of LIPIcs, Article 13, 2022.

\bibitem{bansal-smooth2}
N.~Bansal, H.~Jiang, R.~Meka, S.~Singla, and M.~Sinha.
\newblock Smoothed analysis of the Koml\'os conjecture.
\newblock In \emph{49th International Colloquium on Automata, Languages, and Programming}, volume 229 of LIPIcs, Article 14, 2022.

\bibitem{bin}
R.~Hoberg and T.~Rothvoss.
\newblock A logarithmic additive integrality gap for bin packing.
\newblock In \emph{Proceedings of the Twenty-Eighth Annual ACM-SIAM Symposium on Discrete Algorithms}, pages 2616--2625, 2017.

\bibitem{PX}
W.~Perkins and C.~Xu.
\newblock Frozen $1$-RSB structure of the symmetric Ising perceptron.
\newblock In \emph{Proceedings of the 53rd Annual ACM Symposium on Theory of Computing}, pages 1579--1588, 2021.

\bibitem{ss}
A.~Sah and M.~Sawhney.
\newblock Distribution of the threshold for the symmetric perceptron.
\newblock \emph{arXiv preprint arXiv:2301.10701}, 2023.

\bibitem{spencer1985six}
J.~Spencer.
\newblock Six standard deviations suffice.
\newblock \emph{Transactions of the American Mathematical Society}, 289(2):679--706, 1985.

\bibitem{matrixspencer-cosine}
A.~Zouzias.
\newblock A matrix hyperbolic cosine algorithm and applications.
\newblock In \emph{Automata, Languages, and Programming, Part~I},
volume 7391 of \emph{Lecture Notes in Computer Science},
pages 846--858. Springer, Heidelberg, 2012.

\end{thebibliography}
\end{document}